\documentclass[12pt]{amsart}
%\documentclass[titlepage]{slides}

%%% FONT AND SYMBOL DECLARATIONS

\usepackage{amsmath,amssymb,amsfonts,graphics,amsthm}
\usepackage{verbatim}
\usepackage{fullpage}
\usepackage{tikz}

%\usepackage[T1]{fontenc}
%\usepackage[sc]{mathpazo}
%\topmargin=0cm \evensidemargin=0cm \oddsidemargin=0cm
%\textwidth=16cm \textheight=22cm
%\renewcommand{\baselinestretch}{2}
% MATH -------------------------------------------------------------------
% MATH -------------------------------------------------------------------
% SETS OF NUMBERS AND PROJECTIONS
\newcommand{\N}{\mathbb{N}}

\newcommand{\C}{\mathbb{C}}
\newcommand{\F}{\mathbb{F}}
\newcommand{\G}{\mathbb{G}}

\newcommand{\T}{\mathbb{T}}

\newcommand{\mc}{\mathcal}
\newcommand{\id}{\text{id}}

%------------------ my own------------

\newcommand{\TL}{\text{TL}}

\newcommand{\Pol}{\text{Pol}}

% THEOREM Environments--------------------------------
\newtheorem{thm}{Theorem}[section]
\newtheorem{cor}[thm]{Corollary}
\newtheorem{lem}[thm]{Lemma}
\newtheorem{prop}[thm]{Proposition}

\theoremstyle{definition}
\newtheorem{defn}[thm]{Definition}

\theoremstyle{remark}
\newtheorem{rem}[thm]{Remark}

\numberwithin{equation}{section}

\begin{document}

\title[Strong Convergence]
{Strong asymptotic freeness for free orthogonal quantum groups} 

\date{\today}

\author{Michael Brannan}

\address{Michael Brannan: Department of Mathematics, University  of Illinois at Urbana-Champaign, Urbana, IL 61801, USA}
\email{mbrannan@illinois.edu}
\urladdr{http://www.math.uiuc.edu/~mbrannan}

\keywords{Quantum groups, free probability, asymptotic free independence, strong convergence, property of rapid decay.}
\thanks{2010 \it{Mathematics Subject Classification:}
\rm{ 46L54, 20G42 (Primary);  46L65 (Secondary)}}
\thanks{This research was supported by an NSERC Postdoctoral Fellowship}

\begin{abstract}
We prove that the normalized standard generators of the free orthogonal quantum group $O_N^+$ converge strongly to a free  semicircular system as $N \to \infty$.  Analogous results are obtained for the free unitary quantum groups, and some applications are given.
\end{abstract}
\maketitle

\section{Introduction} \label{section:intro}

Given a closed subgroup $G$ of the compact Lie group $U_N$ of $N \times N$ unitary matrices over $\C$, a fundamental mathematical problem is the computation of \textit{polynomial integrals} over $G$.  That is, we consider the $\ast$-algebra $\Pol(G) \subseteq C(G)$ of polynomial functions on $G$ generated by the $N^2$ standard coordinate functions \[\{u_{ij}\}_{1 \le i,j \le N} \subset C(G); \qquad u_{ij}(g) = (i,j)\text{-th coordinate of the matrix } g \in G, \] and seek to evaluate the integrals 
\[h_G(f) = \int_G f(g)dg \qquad (f \in \Pol(G)),\] where $dg$ denotes the Haar probability measure on $G$.  Equivalently, this amounts to determining the joint distribution of the standard coordinates $\{u_{ij}\}_{1 \le i,j \le N}$, viewed as bounded random variables in $L^\infty(G,dg)$.  

The need to compute polynomial integrals for various examples of compact matrix groups arises in many areas of mathematics and physics, including group representation theory, statistical physics, random matrix theory and free probability.  Unfortunately, for most subgroups $G \subseteq U_N$, the evaluation (or even the approximation) of arbitrary polynomial integrals is a non-trivial task.  Even for the most natural examples, such as the orthogonal and unitary groups $O_N$ and $U_N$, the computation of polynomial integrals remains an active area of research.  See for example \cite{AuLa03, Co03, CoSn06, MaNo13}.  

In the context of polynomial integrals over $O_N$ or $U_N$, one is often interested in their behavior in the large $N$ limit.  In this regime, the calculations are simplified by the fact that
the normalized random variables $\{\sqrt{N}u_{ij}\}_{1 \le i,j \le N} \subset L^\infty(O_N,dg)$
(respectively $L^\infty(U_N,dg)$) are asymptotically independent and identically distributed  $N(0,1)$ real (respectively complex) Gaussian random variables.  See for example \cite{DiFr87}.    For example, if $\mathcal G = \{g_{ij}\}_{i,j \in \N}$ denotes an i.i.d. $N(0,1)$ real Gaussian family on some standard probability space $(\Omega, \mu)$ with expectation $\mathbb E = \int_\Omega \cdot d\mu$, then for any (non-commutative) polynomial $P \in \C\langle X_{ij}: i,j \in \N \rangle $, we have \[\lim_{N \to \infty} h_{O_N} \Big(P(\{\sqrt{N}u_{ij}\}_{1 \le i,j \le N})\Big) = \mathbb E(P(\mathcal G)).\]     
In the above equation we of course assume that $N$ is large enough so that the evaluation map $P \mapsto P(\{\sqrt{N}u_{ij}\}_{1 \le i,j \le N}) \in \text{Pol}(O_N)$ is well defined.  The above equality essentially says that for large $N$,
the value of a polynomial integral over $O_N$ or $U_N$ can be approximated by the corresponding (often simpler to compute) Gaussian integral. 

In \cite{Wa}, S. Wang introduced natural non-commutative analogues of the orthogonal  
and unitary groups: the \textit{free orthogonal and free unitary quantum groups} $O_N^+$ and $U_N^+$.  These objects are compact quantum groups (in the sense of S. Woronowicz \cite{Wo2}) and are given in terms of the pairs \[O_N^+ = (C(O_N^+), \Delta_o) \quad \text{and} \quad U_N^+ = (C(U_N^+), \Delta_u),\] where $C(U_N^+)$ is the universal C$^\ast$-algebra generated by $N^2$ elements $\{v_{ij}\}_{1 \le i,j \le N}$ subject to the relations which make the matrices $V = [v_{ij}]$ and $\bar V = [v_{ij}^*]$ unitary in $M_N(C(U_N^+))$, and $C(O_N^+) = C(U_N^+)/\langle v_{ij} = v_{ij}^* \rangle$.  I.e., $C(O_N^+)$ is the universal C$^\ast$-algebra generated by $N^2$ self-adjoint elements $\{u_{ij}\}_{1 \le i,j \le N}$ subject to the relations which make the matrix $U = [u_{ij}]$ unitary.  The \textit{coproduct} $\Delta_u:C(U_N^+) \to C(U_N^+) \otimes_{\text{min}}C(U_N^+)$ is the unique unital C$^\ast$-algebra homomorphism defined by \[\Delta_u(v_{ij}) = \sum_{k=1}^N v_{ik} \otimes v_{kj} \qquad (1 \le i,j \le N),\] giving  $C(U_N^+)$ a (bisimplifiable) C$^\ast$-bialgebra structure.   The formula for the coproduct $\Delta_o:C(O_N^+) \to C(O_N^+) \otimes_{\text{min}}C(O_N^+)$ is the obvious analogue.
Note that the C$^\ast$-algebras $C(O_N^+)$ and $C(U_N^+)$ are free analogues of the commutative function algebras $C(O_N)$ and $C(U_N)$, respectively.

Since $\G = O_N^+, U_N^+$ is a compact quantum group, it admits a \textit{Haar state} $h_\G: C(\G) \to \C$, which is the non-commutative analogue of the left and right translation-invariant Haar measure on a compact group.  More precisely, $h_\G$ is the unique state\footnote{$h_{\G}$ is tracial for $O_N^+$ and $U_N^+$ \cite{Wa}.} satisfying the following $\Delta$-bi-invariance condition \[ (h_\G \otimes \id)  \Delta(a) = (\id \otimes h_\G)\Delta(a) = h_\G(a)1_{C(\G)} \qquad (a \in C(\G)).\] As a consequence of the existence of the Haar state, one can also consider in this non-commutative context the problem of computing polynomial integrals over $O_N^+$ and $U_N^+$, respectively.  More precisely, if $\{u_{ij}\}_{1 \le i,j \le N}$ and $\{v_{ij}\}_{1 \le i,j \le N}$ denote the generators of $C(O_N^+)$ and $C(U_N^+)$, respectively, 
then we want to compute all joint $\ast$-moments \begin{align*}
& h_{O_N^+}(u_{i(1)j(1)}u_{i(2)j(2)} \cdot \ldots \cdot u_{i(k)j(k)}) \qquad (1 \le i(r),j(r)\le N, \ 1 \le r \le k, \ k \in \N),\\
&h_{U_N^+}(v_{i(1)j(1)}^{\epsilon(1)}v_{i(2)j(2)}^{\epsilon(2)} \cdot \ldots \cdot v_{i(k)j(k)}^{\epsilon(k)}) \qquad (1 \le i(r),j(r),\le N, \epsilon(r) \in \{1,\ast\}, \ 1 \le r \le k, \ k \in \N). 
\end{align*}

As in the classical case, the precise computation of these moments is extremely difficult for a fixed dimension $N$.  However, as $N \to \infty$, T. Banica and B. Collins \cite{BaCo} have shown that joint distributions of $\{\sqrt{N}u_{ij}\}_{1 \le i,j \le N}$ and $\{\sqrt{N}v_{ij}\}_{1 \le i,j \le N}$ are modeled by the free probability analogues of real (respectively complex) Gaussian systems.  Before stating their result, we remind the reader of some basic facts about free probability.  For more information on these and related concepts, we refer the reader to the monograph \cite{NiSp}.       

\begin{defn}
\begin{enumerate}
\item A \textit{noncommutative probability space (NCPS)} is a pair $(A,\varphi)$, where $A$ is
a unital C$^\ast$-algebra, and $\varphi: A \to \C$ is a state (i.e. a linear functional such that
$\varphi(1_A) = 1$ and $\varphi(a^*a) \ge 0$ for all $a \in A$).  Elements $a \in A$ are called \textit{random variables}. 
\item Let $(A,\varphi)$ be a NCPS.  A family of $\ast$-subalgebras $\{A_r\}_{r \in \Lambda}$ of $A$ is said to be \textit{freely independent (or free)} if the following condition holds: for any choice of indices $r(1) \ne r(2), r(2) \ne r(3), \ldots , r(k -1 ) \ne r(k) \in \Lambda$ and any choice of centered random variables variables $x_{r(j)} \in A_{r(j)} \in A_{r(j)} \cap \ker \varphi$, we have the equality 
\[\varphi(x_{r(1)}x_{r(2)} \cdot \ldots \cdot x_{r(k)}) = 0.\]   
\item A family of random variables $\{x_r\}_{r \in \Lambda} \subset (A,\varphi)$ is \textit{free} if the family of unital $\ast$-subalgebras
\[\{A_r\}_{r \in \Lambda}; \qquad  A_r := \text{alg}\big( 1,x_r, x_r^*\big),\]
is free in the above sense.
\item A family of random variables $S = \{s_r\}_{r \in \Lambda} \subset (A,\varphi)$ is called a \textit{free semicircular system} if $S$ is free and each $s_r \in S$ is self-adjoint and  identically distributed with respect to $\varphi$ according to Wigner's semicircle law.  That is,
\[\varphi(s_r^k) = \frac{1}{2\pi}\int_{-2}^2 t^k\sqrt{4-t^2}dt \qquad (k \ge 0, \ r \in \Lambda).\] 
\item A family of random variables $C = \{c_{r}\}_{r \in \Lambda} \subset (A, \varphi)$ is called a \textit{free circular system} if $C$ is free and each $c_r \in C$ has the same distribution as the operator $\frac{s_1 +is_2}{\sqrt{2}}$, where $\{s_1,s_2\}$ is a standard free semicircular system.
\item Let $S_N = \{x_r^{(N)}\}_{r \in \Lambda} \subset (A_N,\varphi_N)$ be a sequence of families of random variables and $S = \{x_r\}_{r \in \Lambda} \in (A,\varphi)$ be another family of random variables.  We say that \textit{$S_N$ converges to $S$ (or $S_N \to S$) in distribution as $N \to \infty$} if for
any non-commutative polynomial $P \in \C\langle X_r: r \in \Lambda \rangle$, we have
\[\lim_{N \to \infty} \varphi_N(P(S_N)) = \varphi(P(S)).\] 
\end{enumerate}
\end{defn}

\begin{rem}
\begin{enumerate}
\item Note that (just like classical independence), the joint moments of a freely family $\{x_r\}_{r\in \Lambda} \subset (A,\varphi)$ can be computed from the individual moments of each variable $x_r$ using the definition of freeness.
\item In the context of free probability theory, a free semicircular (circular) system is the free probability analogue of a standard real (complex)) $N(0,1)$-Gaussian family.  
\end{enumerate}
\end{rem}

We are now in a position to state the asymptotic free independence result of T. Banica and B. Collins.

\begin{thm}[\cite{BaCo}, Theorems 6.1 and 9.4] \label{thm:BaCo}
Let $S = \{s_{ij}: i,j \in \N\}$ be a free semicircular system and let $C = \{c_{ij}:  i,j \in \N\}$ be a free circular system in a NCPS $(A,\varphi)$.  For each $N \in \N$, let $S_N = \{\sqrt{N}u_{ij}\}_{1 \le i,j \le N} \subset (C(O_N^+),h_{O_N^+})$ and $C_N = \{\sqrt{N} v_{ij}\}_{1 \le i,j \le N} \subset (C(U_N^+), h_{U_N^+})$ be the normalized generators of $C(O_N^+)$ and $C(U_N^+)$, respectively.  Then 
\[
S_N \to S \quad \text{and} \quad C_N \to C \quad \text{in distribution as $N \to \infty$}.
\]   
I.e., for any $k \in \N$, $1 \le i(r),j(r),\le N$, $\epsilon(r) \in \{1,\ast\}$ and $1 \le r \le k$, we have 
 \begin{align*}
& \lim_{N \to \infty} N^{k/2}h_{O_N^+}(u_{i(1)j(1)}u_{i(2)j(2)} \cdot \ldots \cdot u_{i(k)j(k)}) = \varphi(s_{i(1)j(1)}s_{i(2)j(2)} \cdot \ldots \cdot s_{i(k)j(k)})  \\
&\lim_{N \to \infty}N^{k/2}h_{U_N^+}(v_{i(1)j(1)}^{\epsilon(1)}v_{i(2)j(2)}^{\epsilon(2)} \cdot \ldots \cdot v_{i(k)j(k)}^{\epsilon(k)}) = \varphi(c_{i(1)j(1)}^{\epsilon(1)}c_{i(2)j(2)}^{\epsilon(2)} \cdot \ldots \cdot c_{i(k)j(k)}^{\epsilon(k)}). 
\end{align*}
\end{thm}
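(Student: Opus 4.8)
The strategy is to compute the Haar-state moments explicitly via Woronowicz's Tannaka--Krein duality together with the Temperley--Lieb description of the intertwiner spaces of $O_N^+$ (and the analogous bicoloured description for $U_N^+$), and then extract the leading-order asymptotics in $N$. Recall that for a compact matrix quantum group $\G$ with fundamental unitary $U = [u_{ij}]$, the Haar state on a degree-$k$ monomial in the $u_{ij}$ is a matrix coefficient of the orthogonal projection $P_k$ onto the fixed-point space $\mathrm{Fix}(U^{\otimes k}) = \{\xi \in (\C^N)^{\otimes k}: U^{\otimes k}\xi = \xi\otimes 1\}$, namely
\[ h_\G(u_{i(1)j(1)}\cdots u_{i(k)j(k)}) = \la P_k (e_{j(1)}\otimes\cdots\otimes e_{j(k)}),\ e_{i(1)}\otimes\cdots\otimes e_{i(k)}\ra. \]
For $\G = O_N^+$, Banica's theorem identifies $\mathrm{Fix}(U^{\otimes k})$, for $k$ even, with the span of the \emph{diagram vectors} $\{\xi_\pi : \pi \in NC_2(k)\}$, where $NC_2(k)$ is the set of non-crossing pair partitions of $\{1,\dots,k\}$ and $\xi_\pi = \sum_{i_1,\dots,i_k}\delta_\pi(i_1,\dots,i_k)\,e_{i_1}\otimes\cdots\otimes e_{i_k}$, with $\delta_\pi$ the indicator that the multi-index is constant on each block of $\pi$; for $k$ odd the space is $\{0\}$.

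Next I would express $P_k$ through this (non-orthonormal) spanning set. Writing $G_{kN} = [\la\xi_\sigma,\xi_\pi\ra]_{\pi,\sigma}$ for the Gram matrix, whose entries are $G_{kN}(\pi,\sigma) = N^{\ell(\pi,\sigma)}$ with $\ell(\pi,\sigma) = |\pi\vee\sigma|$ the number of loops formed by superposing the diagrams of $\pi$ and $\sigma$, one has $P_k = \sum_{\pi,\sigma} W_{kN}(\pi,\sigma)\,|\xi_\pi\ra\la\xi_\sigma|$, where the Weingarten matrix $W_{kN} = G_{kN}^{-1}$. This yields
\[ N^{k/2}h_{O_N^+}(u_{i(1)j(1)}\cdots u_{i(k)j(k)}) = \sum_{\pi,\sigma\in NC_2(k)}\delta_\pi(\vec\imath)\,\delta_\sigma(\vec\jmath)\,N^{k/2}W_{kN}(\pi,\sigma). \]
The crucial combinatorial input is that $\ell(\pi,\pi) = k/2$ for every $\pi$, while $\ell(\pi,\sigma)\le k/2-1$ whenever $\pi\ne\sigma$; hence $N^{-k/2}G_{kN} = \mathrm{Id} + O(N^{-1})$, and inverting gives $N^{k/2}W_{kN} = \mathrm{Id} + O(N^{-1})$ entrywise (which in particular forces $G_{kN}$ to be invertible, i.e.\ the $\xi_\pi$ to be linearly independent, for $N$ large).

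Passing to the limit in the display above gives $\sum_{\pi\in NC_2(k)}\delta_\pi(\vec\imath)\,\delta_\pi(\vec\jmath)$. Finally I would recognize this as the semicircular moment $\varphi(s_{i(1)j(1)}\cdots s_{i(k)j(k)})$: by freeness and the non-crossing moment--cumulant formula, a free semicircular system $\{s_{ij}\}$ with covariance $\varphi(s_{ij}s_{k\ell}) = \delta_{ik}\delta_{j\ell}$ satisfies $\varphi(s_{i(1)j(1)}\cdots s_{i(k)j(k)}) = \sum_{\pi\in NC_2(k)}\prod_{\{r,s\}\in\pi}\varphi(s_{i(r)j(r)}s_{i(s)j(s)})$, and the product of pair covariances is exactly $\delta_\pi(\vec\imath)\,\delta_\pi(\vec\jmath)$. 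This settles $O_N^+$. The $U_N^+$ case is identical after replacing $NC_2(k)$ by the set of non-crossing pairings that join a $v$-leg to a $v^\ast$-leg relative to the colour word $\epsilon(1)\cdots\epsilon(k)$ (Banica's bicoloured Temperley--Lieb description of $\mathrm{Fix}$ for mixed tensor words): the same Gram/Weingarten asymptotics apply and the limit matches the free circular moments $\varphi(c_{i(1)j(1)}^{\epsilon(1)}\cdots c_{i(k)j(k)}^{\epsilon(k)})$.

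The main obstacle is the uniform-in-$N$ control of the Weingarten matrix: one must verify the loop-count inequality $\ell(\pi,\sigma)\le k/2-1$ for $\pi\ne\sigma$ and then ensure that the $O(N^{-1})$ perturbation of $N^{-k/2}G_{kN}$ away from the identity propagates to a genuine $O(N^{-1})$ error in $N^{k/2}W_{kN}$, uniformly over the (finite, $N$-independent) index set $NC_2(k)$. This is a Neumann-series estimate, but it relies on knowing that $\|(N^{-k/2}G_{kN})^{-1}\|$ stays bounded as $N\to\infty$ --- precisely the quantitative content supplied by Banica and Collins. Everything else reduces to bookkeeping with non-crossing partitions and the Wick formula for semicircular and circular families.
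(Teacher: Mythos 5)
Your proposal is correct, but note that the paper does not prove this statement at all: Theorem \ref{thm:BaCo} is imported verbatim from Banica--Collins (Theorems 6.1 and 9.4 of \cite{BaCo}), and the ``combinatorial Weingarten methods'' the paper alludes to in the introduction are exactly the Gram/Weingarten-matrix argument over non-crossing pairings that you reconstruct. Your key steps --- the fixed-point-projection formula for the Haar state, the Temperley--Lieb description of $\mathrm{Fix}(U^{\otimes k})$, the loop-count bound $|\pi\vee\sigma|\le k/2-1$ for $\pi\ne\sigma$ giving $N^{k/2}W_{kN}=\mathrm{Id}+O(N^{-1})$, and the identification of the limit with the free Wick formula --- all check out and match the cited source's proof.
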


Theorem \ref{thm:BaCo} shows that, in an approximate sense, the generators of $O_N^+$ and $U_N^+$ are modeled by free (semi)circular random variables.  In particular, if we denote by $L^\infty(O_N^+)$ and $L^\infty(U_N^+)$ the von Neumann algebras generated by the GNS constructions associated to the corresponding Haar states, then Theorem \ref{thm:BaCo} suggests that these von Neumann algebras may share some analytic properties with the free group factors $L(\F_k)$ ($k \ge 2$).  Over the last decade this has indeed shown to be the case.  See for example \cite{BrAP, Fr13, Is12, VaVe, Ve}.  Unfortunately, the approximation result of Theorem \ref{thm:BaCo} has yet to find a direct application to the study of the analytic structures of $L^\infty(O_N^+)$ and $L^\infty(U_N^+)$.  One reason for this is that the combinatorial ``Weingarten methods'' used to establish this theorem provide very little information about the rate of approximation to a free (semi)circular system.

By exploiting a certain connection between $O_N^+$ and S. Woronowicz's deformed $SU_{-q}(2)$ quantum group \cite{Wo} (where $N = q+q^{-1}$), T. Banica, B. Collins and P. Zinn-Justin  computed the spectral measure of a single generator $u_{ij} \in C(O_N^+)$ relative to the Haar state in \cite{BaCoZJ} .  Using this fact, the authors were able to substantially improve the approximation result of Theorem \ref{thm:BaCo} for a \textit{single generator} $u_{ij}$ of $O_N^+$ by showing that $\sqrt{N}u_{ij}$ \textit{superconverges} (in the sense of H. Bercovici and D. Voiculescu \cite{BeVo}) to a semicircular variable $s \in (A,\varphi)$ .  In particular,
this implies that for any polynomial $P \in \C[X]$, not only do we have that $P(\sqrt{N}u_{ij}) \to P(s)$ in distribution, but we also have the convergence of corresponding operator norms: \[\lim_{N \to \infty} \|P(\sqrt{N}u_{ij})\|_{L^\infty(O_N^+)} = \|P(s)\|_{L^\infty(A,\varphi)}.\]
We note that this norm convergence result is not at all clear from the methods of Theorem \ref{thm:BaCo}.     

The main result of this note is the non-commutative multivariate analogue of the above norm convergence result.  Before stating our main theorem, we recall the notion of strong convergence for random variables, as defined by C. Male in \cite{Ma12}:  Let $(A,\varphi)$ and $(A_N,\varphi_N)$ ($N \in \N$) be NCPS's and let  $S = \{x_r\}_{r \in \Lambda} \in (A,\varphi)$ and $S_N = \{x_r^{(N)}\}_{r \in \Lambda} \subset (A_N,\varphi_N)$ be a sequence
of  families of random variables.  We say that \textit{$S_N \to S$ strongly in distribution} as $N \to \infty$ if $S_N \to S$ in distribution and 
\[\lim_{N \to \infty} \|P(S_N)\|_{L^\infty(A_N,\varphi_N)} =  \|P(S)\|_{L^\infty(A,\varphi)}  \]
for all non-commutative polynomials $P \in \C\langle X_r: r \in \Lambda \rangle$.

Our main result is:

\begin{thm} \label{thm:main}
Let $S = \{s_{ij}:  i,j \in \N\}$ be a standard free semicircular system in a finite von Neumann algebra $(M,\tau)$ and $S_N = \{\sqrt{N}u_{ij}^{(N)}: 1 \le i,j \le N\} \cup \{0:i,j > N\} \subset (C(O_N^+),h_{O_N^+})$ be the (normalized) standard generators of $O_N^+$.  Then \[S_N \longrightarrow S \quad \text{strongly in distribution as $N \to \infty$.}\]
\end{thm}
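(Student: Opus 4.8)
The plan is to deduce convergence in distribution from Theorem~\ref{thm:BaCo} and then upgrade it to the required norm convergence by combining a uniform ``property of rapid decay'' estimate for $O_N^+$ with a standard interchange-of-limits argument. Any noncommutative polynomial involves only finitely many of the variables, so it is enough to fix $P \in \C\langle X_{ij} : i,j \in \N\rangle$ and show $\lim_{N\to\infty}\|P(S_N)\| = \|P(S)\|$, where $\|\cdot\|$ denotes the norm of $L^\infty(O_N^+)$ on the left and of $M$ on the right. Since $\|P(S_N)\|^2 = \|(P^*P)(S_N)\|$ and likewise for $S$, we may assume $P$ is self-adjoint, of some degree $d$. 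Recall that on $L^\infty(O_N^+)$ the Haar state extends to a faithful normal trace, still denoted $h_{O_N^+}$.

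The lower bound $\liminf_{N}\|P(S_N)\| \ge \|P(S)\|$ is automatic. For each $m \in \N$, since $P(S_N)^{2m}\ge 0$ we have $\|P(S_N)\|^{2m} = \|P(S_N)^{2m}\| \ge h_{O_N^+}(P(S_N)^{2m})$, and the right-hand side converges to $\tau(P(S)^{2m})$ by Theorem~\ref{thm:BaCo}; hence $\liminf_N \|P(S_N)\|^{2m}\ge \tau(P(S)^{2m})$, and letting $m\to\infty$ gives the claim because $\tau$ is faithful on $M$, so $\tau(P(S)^{2m})^{1/2m}\to\|P(S)\|$. All the work is therefore in the upper bound $\limsup_N\|P(S_N)\|\le\|P(S)\|$.

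The key ingredient --- and, I expect, the main obstacle --- is a \textbf{uniform Haagerup inequality} for $O_N^+$: there exist constants $C\ge 1$ and $\alpha\ge 0$, \emph{independent of $N$}, such that for every $k\ge 0$ and every $x\in\Pol(O_N^+)$ lying in the span of the matrix coefficients of the irreducible corepresentations $u_0,u_1,\ldots,u_k$ one has
\[
\|x\|\ \le\ C(1+k)^\alpha\,\|x\|_{L^2(O_N^+)} .
\]
This is Vergnioux's property of rapid decay for the discrete dual $\widehat{O_N^+}$ (with its natural length function $\ell(u_k)=k$), but upgraded so that the constants do not degenerate as $N\to\infty$; proving this uniformity is the heart of the matter. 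I would obtain it from the Temperley--Lieb description of $\Mor(O_N^+)$: one estimates the operator norms of the convolution maps attached to the ``length-$k$ spheres'' $\bigoplus_{j\le k}u_j\ominus\bigoplus_{j<k}u_j$ of $\widehat{O_N^+}$ using the $SU(2)$-type fusion rules together with explicit norm and trace estimates for the Jones--Wenzl projections $\JW{n}$ and the Temperley--Lieb intertwiners, and one checks that for $N\ge 3$ every quantity that appears stays bounded uniformly in $N$ --- heuristically, larger $N$ makes $\widehat{O_N^+}$ behave ``more freely'' and hence only improves the estimate.

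Granting this, the upper bound follows quickly. Since $P$ has degree $d$, the polynomial $P(S_N)^m$ has degree $dm$ and hence lies in the span of the coefficients of $u_1^{\otimes dm}$, which decomposes into $u_0,\ldots,u_{dm}$; applying the uniform Haagerup inequality with $k=dm$ and using self-adjointness of $P(S_N)$ gives
\[
\|P(S_N)\|^{m} \;=\; \|P(S_N)^m\| \;\le\; C(1+dm)^\alpha\, h_{O_N^+}\!\big(P(S_N)^{2m}\big)^{1/2},
\]
so $\|P(S_N)\|\le \big[C(1+dm)^\alpha\big]^{1/m}\, h_{O_N^+}(P(S_N)^{2m})^{1/2m}$ for every $m$. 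Taking $\limsup_{N\to\infty}$ and using Theorem~\ref{thm:BaCo} for the fixed moment of order $2m$ yields $\limsup_N\|P(S_N)\|\le \big[C(1+dm)^\alpha\big]^{1/m}\,\tau(P(S)^{2m})^{1/2m}$; now let $m\to\infty$, noting that the prefactor tends to $1$ and $\tau(P(S)^{2m})^{1/2m}\to\|P(S)\|$. This gives $\limsup_N\|P(S_N)\|\le\|P(S)\|$, and together with the lower bound completes the proof.
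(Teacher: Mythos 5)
Your proposal is correct and follows essentially the same route as the paper: convergence in distribution from Theorem \ref{thm:BaCo}, a uniform-in-$N$ rapid decay estimate applied to the powers $P(S_N)^m$ to pass from $L^{2m}$- to $L^\infty$-norms, and an interchange of the limits in $N$ and $m$. The one step you only sketch --- uniformity of the Haagerup/RD constant for $N \ge 3$, which you rightly flag as the heart of the matter --- is exactly the content of the paper's Lemma \ref{lem:D_N_bdd}, which carries out the Temperley--Lieb/three-vertex computation you describe (via the Kauffman--Lins norm formula and the quantum dimensions $[k+1]_q$) and in fact shows $D_N \to 1$ as $N \to \infty$.
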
 

The above strong asymptotic freeness result answers a question posed in Section 6 of \cite{BaCuSp} on the mode of convergence to free independence of the joint distribution of the standard generators of $O_N^+$.  This result should also be compared with other recent strong asymptotic freeness results for independent random matrix ensembles.  See for example \cite{HaTh05, CoMa12, Ma12}.

The key ingredient we require for our proof of Theorem \ref{thm:main} is R. Vergnioux's property of rapid decay (property (RD)) for the discrete dual quantum groups $\widehat{O_N^+}$ \cite{Ve}.  We show that property (RD) for the quantum groups  $\{\widehat{O_N^+}\}_{N \ge 3}$ allows us to approximate to any desired degree of accuracy (uniformly in $N$) the operator norm of a fixed non-commutative polynomial in the variables $\{\sqrt{N}u_{ij}^{(N)}\}_{1 \le i,j \le N}$ by its non-commutative $L^p$-norm, for some sufficiently large even integer $q$.  This transfer from $L^\infty$ to $L^p$-norms then allows us to immediately deduce Theorem \ref{thm:main} from Theorem \ref{thm:BaCo}.

The remainder of the paper is organized as follows: In Section \ref{sect:prelim}, we remind the reader of some basic facts on quantum groups, focusing mainly on the example of $O_N^+$.   We then prove Theorem \ref{thm:main} in Section \ref{sect:mainresult} and end with some applications of our result and concluding remarks in Section \ref{sect:applications}.  In particular, we consider the analogous strong convergence result for $U_N^+$ (Corollary \ref{cor:unitary}), strong convergence for polynomials with matrix coefficients (Corollary \ref{cor:matrix_coeff}), and an application to $L^\infty$-$L^2$ norm inequalities for polynomials over free semicircular systems (Corollary \ref{cor:normineq}). 

\subsection*{Acknowledgement}  The author is grateful to Beno\^it Collins for stimulating  discussions which improved an earlier version of this work.  
                     
\section{Preliminaries on $O_N^+$} \label{sect:prelim}

Our main reference for the theory of compact quantum groups will be the book \cite{Ti}.  All unexplained terminology can be found there.  For the remainder of the paper we write $\N = \{1,2,3, \ldots\}$ and $\N_0 = \N \cup \{0\}$. 

\subsection{Peter-Weyl Decomposition of $L^2(O_N^+)$}

Denote by $\Pol(O_N^+) \subset C(O_N^+)$ the dense $\ast$-subalgebra generated by the canonical generators $\{u_{ij}^{(N)}\}_{1 \le i,j \le N} \subset C(O_N^+)$.  We recall that by general compact quantum group theory, the Haar state is always faithful on $\Pol(O_N^+)$.  Denote by $L^2(O_N^+)$ the GNS Hilbert space obtained by completing $\Pol(O_N^+)$ with respect to the sesquilinear form $\langle x |y \rangle = h_{O_N^+}(y^*x)$, and let $L^\infty(O_N^+) \subset \mc B(L^2(O_N^+))$ denote the von Neumann algebra generated by $\Pol(O_N^+)$ acting on $L^2(O_N^+)$ by (extending) left multiplication.  In the following, we simultaneously identify $\Pol(O_N^+)$ with its image as a $\sigma$-weakly dense subalgebra of $L^\infty(O_N^+)$ and as a norm-dense subspace of $L^2(O_N^+)$ via the GNS construction.   

The irreducible representations of the quantum group $O_N^+$ was first studied by T. Banica in \cite{Ba0}, and this gives rise to a natural orthogonal decomposition of $L^2(O_N^+)$ in terms of finite dimensional subspaces spanned by matrix elements of these irreducible representations.

Rather than discussing representations of compact quantum groups, we choose to describe this 
``Peter-Weyl'' decomposition of $L^2(O_N^+)$ intrinsically as follows.  For each $k \in \N_0$, define a subspace $H_k(N)\subset \Pol(O_N^+)$ by setting 
\begin{align*}&H_0(N):=\C 1_{\Pol(O_N^+)}, \quad H_1(N) := \text{span}\{u_{ij}^{(N)}\}_{1 \le i,j \le N}, \\
& H_k(N):=H_1(N)H_{k-1}(N) \ominus H_{k-2}(N) \qquad (k \ge 2),
\end{align*}
where $H_1(N)H_{k-1}(N) := \text{span} \{xy: x \in H_1(N), \ y \in H_{k-1}(N)\}$. The fact that the above recursive definition for $H_k(N)$ makes sense follows from the analysis of the representation theory of $O_N^+$ in \cite{Ba0}.  Moreover, we have the following result.

\begin{thm}[\cite{Ba0}] \label{thm:Banica}
For each $k \in \N_0$, there is a unique (up to isomorphism) unitary representation $U^k$ of $O_N^+$ whose matrix elements span $H_k(N)$.  Conversely, every irreducible unitary representation of $O_N^+$ arises this way.   Moreover, the representations $\{U^k\}_{k \in \N_0}$ satisfy the tensor product decomposition rules
\[U^n \boxtimes U^k \cong \bigoplus_{0 \le r \le \min\{k,n\}} U^{n+k - 2r} \qquad (n,k \in \N_0).\] As a consequence, we have the following orthogonal decomposition
\[L^2(O_N^+) = \ell^2 - \bigoplus_{k \in \N_0} H_k(N),\] and the following multiplication ``rule'' 
\[xy \in \bigoplus_{0 \le r \le \min\{k,n\}} H_{n+k-2r}(N) \qquad (x \in H_n(N), \ y \in H_k(N)).\]
\end{thm}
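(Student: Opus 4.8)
The plan is to reduce the statement to a decomposition of the tensor powers of the fundamental representation $U := U^1 = [u_{ij}^{(N)}]$. By the defining relations of $C(O_N^+)$, $U$ is a unitary representation of $O_N^+$ on $\C^N$, and since the $u_{ij}^{(N)}$ are self-adjoint the matrix $U$ is orthogonal, so $U \cong \bar U$ is self-conjugate, with $t := \sum_{i=1}^N e_i \otimes e_i \in \C^N \otimes \C^N$ intertwining the trivial representation $\mathbf 1$ with $U \boxtimes U$ (it satisfies $t^\ast t = N$ and $(t^\ast \otimes \id)(\id \otimes t) = \id$). Since $U$ and $\bar U \cong U$ generate the representation category, every irreducible representation of $O_N^+$ embeds in some $U^{\boxtimes k}$, so it is enough to decompose the $U^{\boxtimes k}$ and track matrix coefficients. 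The main input I would use is Woronowicz's Tannaka--Krein duality, which identifies the concrete rigid $C^\ast$-tensor category of representations of $O_N^+$ with the one generated by the pair $(U,t)$; because the \emph{only} relations imposed on $O_N^+$ are ``$U$ unitary'' and ``$t$ an intertwiner'', this generated category is exactly the Temperley--Lieb category $\TL(N)$, so that $\Mor(U^{\boxtimes n}, U^{\boxtimes m})$ is linearly spanned by the non-crossing pair partitions of $n+m$ points with loop parameter $N$.

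From there the combinatorics is classical. For $N \ge 2$ the Temperley--Lieb diagrams are linearly independent, giving $\dim \Mor(U^{\boxtimes n}, U^{\boxtimes n}) = C_n$ (the $n$-th Catalan number, via Frobenius reciprocity and self-conjugacy), $\dim \Mor(\mathbf 1, U^{\boxtimes 2k}) = C_k$, and $\dim \Mor(\mathbf 1, U^{\boxtimes 2k+1}) = 0$. I would then set $U^0 := \mathbf 1$, $U^1 := U$ (irreducible since $C_1 = 1$), and recursively define $U^k$ as the orthogonal complement of a copy of $U^{k-2}$ inside $U^1 \boxtimes U^{k-1}$; an induction on $k$ using these dimension counts together with Frobenius reciprocity shows that each $U^k$ is irreducible, that the $U^k$ are pairwise inequivalent, and that the Clebsch--Gordan rule $U^n \boxtimes U^k \cong \bigoplus_{r=0}^{\min\{k,n\}} U^{n+k-2r}$ holds. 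This part is formally identical to the derivation of the fusion rules of $SU(2)$: the dimensions $(\dim U^k)_{k \ge 0}$ are forced to satisfy $N \dim U^k = \dim U^{k-1} + \dim U^{k+1}$. Because the $\{U^k\}_{k \in \N_0}$ lie in the $U^{\boxtimes k}$ and conversely account for all their irreducible constituents, they form a complete list of $\Irr(O_N^+)$.

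To obtain the $H_k(N)$ statements I would argue that the linear span of the matrix coefficients of $U^k$ is exactly $H_k(N)$: this holds by definition for $k = 0, 1$, and inductively the coefficients of $U^1 \boxtimes U^{k-1} \cong U^k \oplus U^{k-2}$ span $H_1(N) H_{k-1}(N)$, while the Peter--Weyl (Schur) orthogonality relations for the faithful tracial Haar state make coefficients of inequivalent irreducibles orthogonal in $L^2(O_N^+)$. Hence $H_1(N)H_{k-1}(N) = H_k(N) \oplus H_{k-2}(N)$ is an orthogonal direct sum, so the recursive definition $H_k(N) = H_1(N) H_{k-1}(N) \ominus H_{k-2}(N)$ makes sense and produces a nonzero space. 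Peter--Weyl then gives $L^2(O_N^+) = \ell^2\text{-}\bigoplus_{\pi \in \Irr(O_N^+)}(\text{coeffs of }\pi) = \ell^2\text{-}\bigoplus_{k \in \N_0} H_k(N)$, and for $x \in H_n(N)$ and $y \in H_k(N)$ the product $xy$ is a matrix coefficient of $U^n \boxtimes U^k \cong \bigoplus_{r=0}^{\min\{k,n\}} U^{n+k-2r}$ and therefore lies in $\bigoplus_{r=0}^{\min\{k,n\}} H_{n+k-2r}(N)$.

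I expect the main obstacle to be the step that pins down $\Mor(U^{\boxtimes n}, U^{\boxtimes m})$ as all of $\TL(N)$ with its full expected dimension --- this is also the only place where $N \ge 2$ is genuinely used. That there are no intertwiners \emph{beyond} Temperley--Lieb is the real content of Woronowicz's reconstruction theorem together with the universal (``largest quantum symmetry fixing $t$'') description of $O_N^+$; that the Temperley--Lieb diagrams are linearly \emph{independent} when $N \ge 2$ (they are not when $N = 1$) is a classical fact about the Temperley--Lieb algebras, provable e.g.\ via faithfulness of their Markov trace. Once this is in hand, the inductive decomposition, the fusion rules, and the $L^2$-decomposition are routine bookkeeping with Catalan numbers and Schur orthogonality.
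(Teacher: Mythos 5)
The paper offers no proof of this statement: it is quoted verbatim from Banica's paper \cite{Ba0}, so there is nothing internal to compare against. Your sketch is correct and is essentially the standard argument behind that citation --- Woronowicz's Tannaka--Krein duality identifies the representation category of the universal object $O_N^+$ with the category generated by $(U,t)$, i.e.\ $\TL(N)$, whose diagrams are linearly independent for $N \ge 2$; the Catalan dimension counts then force the $SU(2)$-type recursion and fusion rules, and Schur orthogonality for the Haar state gives the $H_k(N)$ decomposition --- the only cosmetic difference from Banica's original treatment being that he pins down the morphism spaces by squeezing them between $\TL(N)$ and those of $SU_{-q}(2)$ (with $N = q + q^{-1}$) rather than by invoking universality directly.
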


\subsection{Quantum Numbers}

Fix $0 < q < 1$.  Recall that the \textit{$q$-numbers and $q$-factorials} are defined by the formulas

 \[[a]_q = \frac{q^a-q^{-a}}{q-q^{-1}} = \frac{q^{-a+1}(1-q^{2a})}{1-q^2}, \qquad [a]_q! = [a]_q[a-1]_q \ldots [1]_q  \qquad (a \in \N).\]  Note that as $q \to 1$, $[a]_q \to a$.
   
\subsection{The Property of Rapid Decay}

Using Theorem \ref{thm:Banica}, we can define a length function $\ell:\Pol(O_N^+) = \bigoplus_{k \in \N_0} H_k(N) \to \N_0$ by setting 
\[\ell(x) = \min\Big\{n \in \N_0: x \in \bigoplus_{0 \le k \le n} H_k(N)  \Big\} \qquad (x \in \Pol(O_N^+)).\]  

The main tool we will use to establish the strong asymptotic freeness for the generators of $L^\infty(O_N^+)$ is R. Vergnioux's \textit{property of rapid decay (RD)} \cite{Ve}, which provides a way to estimate the $L^\infty$-norm of any $x \in \Pol(O_N^+)$ in terms of its length $\ell(x)$ and its (much easier to compute) $L^2$-norm.  The main estimate we require is as follows, and should be compared to U. Haagerup's fundamental inequality for free groups (see Lemma 1.3 in \cite{Ha}).    

\begin{thm}[\cite{Ve}] \label{thm:Vergnioux}
For each $l \in \N_0$, let $P_l:L^2(O_N^+) \to H_l(N)$ be the orthogonal projection. 
Then there exists a constant $D_N > 1$ such that for any $n,k,l \in \N_0$, $x \in H_n(N)$ and $y \in H_k(N)$, we have
\[\|P_l(xy)\|_{L^2(O_N^+)} \le D_N \|x\|_{L^2(O_N^+)} \|y\|_{L^2(O_N^+)}.\]
\end{thm}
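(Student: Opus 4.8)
The plan is to follow R.\ Vergnioux's argument from \cite{Ve}, which is the quantum-group analogue of U.\ Haagerup's proof (Lemma~1.3 of \cite{Ha}) of his norm inequality for free groups. First I would reduce the statement to a single uniform estimate on a ``fusion morphism''. By Theorem~\ref{thm:Banica} one has $P_l(xy)=0$ unless $l=n+k-2r$ for some $0\le r\le\min\{n,k\}$, so fix such $n,k,l,r$. Since $h_{O_N^+}$ is tracial, the Peter--Weyl orthogonality relations for $O_N^+$ (see \cite{Ti}) show that $\{\sqrt{d_m}\,u^m_{ij}\}_{i,j}$ is an orthonormal basis of $H_m(N)$, where the $u^m_{ij}$ are the matrix coefficients of the irreducible $U^m$ on its $d_m$-dimensional carrier space $\mc H_m$, and $\dim U^m=d_m=[m+1]_q$ with $0<q<1$ determined by $q+q^{-1}=N$ (the identity $\dim U^m=[m+1]_q$ follows by induction from the fusion rules of Theorem~\ref{thm:Banica}, since $\dim U^0=1$ and $\dim U^1=N$). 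Consequently each $H_m(N)$ is isometrically identified with the Hilbert--Schmidt space $\mc{HS}(\mc H_m)$ of $d_m\times d_m$ matrices, $\sqrt{d_m}\,u^m_{ij}\leftrightarrow E_{ij}$; write $x\leftrightarrow X\in\mc{HS}(\mc H_n)$ and $y\leftrightarrow Y\in\mc{HS}(\mc H_k)$, so that $\|x\|_2=\|X\|_2$ and $\|y\|_2=\|Y\|_2$, where $\|\cdot\|_2$ denotes interchangeably the $L^2(O_N^+)$-norm and the Hilbert--Schmidt norm. Since $U^n\boxtimes U^k\cong\bigoplus_{r}U^{n+k-2r}$ is multiplicity-free, $\Mor(U^l,U^n\otimes U^k)$ is one-dimensional; fix a generator that is an isometry $T=T^l_{n,k}\colon\mc H_l\to\mc H_n\otimes\mc H_k$. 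Expanding a product $u^n_{ij}u^k_{ab}$ into matrix coefficients and re-applying the orthogonality relations, one finds that the bilinear map $(x,y)\mapsto P_l(xy)$ becomes, in this picture, the map $X\otimes Y\mapsto\sqrt{d_nd_k/d_l}\;T^*(X\otimes Y)T\in\mc{HS}(\mc H_l)$ (up to a conjugation that does not affect Hilbert--Schmidt norms). The claim is therefore equivalent to the estimate
\[ \|T^*(X\otimes Y)T\|_2\ \le\ D_N\,\sqrt{d_l/(d_nd_k)}\;\|X\|_2\,\|Y\|_2,\qquad\text{uniformly in }n,k,l,r. \]

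Next I would prove this by a graphical Cauchy--Schwarz argument. The naive bound $\|T^*(X\otimes Y)T\|_2\le\|X\otimes Y\|_2=\|X\|_2\|Y\|_2$ (valid since $T$ is an isometry) is too weak by exactly the factor $\sqrt{d_nd_k/d_l}$, so one must exploit that $T$ is not an arbitrary isometry but a Temperley--Lieb morphism. Realizing $\mc H_m$ as the range of the Jones--Wenzl projection $\JW{m}$ inside $\mc H_1^{\otimes m}$, the isometry $T^l_{n,k}$ is, up to a normalizing scalar, the planar diagram that nests $r$ cups at the junction between an $(n-r)$-block and a $(k-r)$-block of through-strands and then applies $\JW{n}\otimes\JW{k}$. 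One computes $\|T^*(X\otimes Y)T\|_2^2$ as the resulting closed planar diagram, with $X,X^*$ inserted on the $\mc H_n$-strands and $Y,Y^*$ on the $\mc H_k$-strands, and estimates it by cutting along the minimal family of strands separating the $X$-boxes from the $Y$-boxes and applying the Hilbert--Schmidt Cauchy--Schwarz inequality across the cut. This bounds the diagram by a product of two ``half-diagrams'', one depending only on $X,X^*$ and one only on $Y,Y^*$, at the cost of a factor equal to the quantum dimension of the cut object --- a product of $q$-integers $[j+1]_q$; closing up the remaining Jones--Wenzl boxes in each half then produces $\|X\|_2$, respectively $\|Y\|_2$, times further $[j+1]_q$-factors. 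This is exactly Haagerup's bookkeeping for free groups, where a reduced word $w=uv$ exhibiting a length-$r$ cancellation is split as $u=u'a$, $v=a^{-1}v'$ and one Cauchy--Schwarzes over the cancelled subword $a$. (Equivalently, the same computation can be carried out inside the Temperley--Lieb category $\TL_N$ using explicit quantum $6j$-symbols.)

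Finally I would collect the constant, and this is where the main difficulty lies. The two steps give $\|P_l(xy)\|_2\le C_{n,k,l,r}\,\|x\|_2\|y\|_2$ with $C_{n,k,l,r}$ an explicit ratio of finitely many $q$-integers $[j+1]_q$. Writing $[j+1]_q=q^{-j}\,\frac{1-q^{2j+2}}{1-q^2}=q^{-j}\varepsilon_j$ with $\varepsilon_j\in[1,(1-q^2)^{-1})$, and using that the powers of $q$ in $C_{n,k,l,r}$ must cancel identically (the target inequality is homogeneous with respect to the $\N_0$-grading), one is left with a product of boundedly many correction factors $\varepsilon_j$, so $C_{n,k,l,r}\le D_N$ for some $D_N$ depending only on $q$, hence only on $N$ --- in fact $D_N\le(1-q^2)^{-c}$ for a small absolute constant $c$, and $D_N>1$ since $q>0$. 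The hard part is precisely this uniformity: that the combinatorial factor coming from the cut stays bounded as $n,k,l\to\infty$. This is where the ``free'' Temperley--Lieb structure of $O_N^+$ is indispensable --- the quantum dimensions grow geometrically, $[j+1]_q\asymp q^{-j}$, which is exactly what forces the powers of $q$ to cancel; for a generic compact matrix group or quantum group the analogous combinatorial factor grows with the degree, and the property of rapid decay can even fail outright.
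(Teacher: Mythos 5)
Your proposal follows essentially the same route as the paper's source for this statement: the paper merely cites \cite{Ve} for Theorem \ref{thm:Vergnioux}, but the quantitative content is recapitulated in the proof of Lemma \ref{lem:D_N_bdd}, where $\|P_l(xy)\|_2$ is bounded by $\bigl(\frac{d_nd_k}{d_l d_r^2}\bigr)^{1/2}\|\phi_l^{n,k}\|^{-2}\|x\|_2\|y\|_2$ and the norm of the fusion morphism $\phi_l^{n,k}$ is evaluated in closed form as a Temperley--Lieb three-vertex via \cite{KaLi} --- exactly your reduction to $T^*(X\otimes Y)T$ followed by $q$-integer bookkeeping. The only substantive difference is that the paper short-circuits your graphical Cauchy--Schwarz step by quoting the explicit three-vertex norm formula, and the cancellation of powers of $q$ that you attribute to ``homogeneity'' is not automatic but is verified there by direct computation (in each factor $\frac{[1+s][n-r+s][k-r+s]}{[l+1+s][s]^2}$ the net exponent of $q$ is $-(3s+l-2)$ in both numerator and denominator, leaving only the uniformly bounded corrections $\prod_{s\ge 1}(1-q^{2s})^{-3}$).
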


Combining Theorem \ref{thm:Vergnioux} with the fusion rules from Theorem \ref{thm:Banica}, we obtain an essentially equivalent statement of property (RD) that is in a form more suitable for our purposes.

\begin{cor}[\cite{Ve}] \label{cor:vergnioux}
For each $x \in \Pol(O_N^+)$, we have  
\[\|x\|_{L^2(O_N^+)} \le \|x\|_{L^\infty(O_N^+)} \le D_N(\ell(x) +1)^{3/2}\|x\|_{L^2(O_N^+)}.\]
\end{cor}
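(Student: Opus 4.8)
The plan is to deduce Corollary \ref{cor:vergnioux} from Theorem \ref{thm:Vergnioux} by a standard ``block-triangular'' summation argument, exactly analogous to the passage from Haagerup's bilinear estimate to his inequality $\|\lambda(f)\| \le C (\deg f + 1)\|f\|_2$ on free groups. Fix $x \in \Pol(O_N^+)$ and set $L = \ell(x)$, so that $x = \sum_{n=0}^L P_n x$ with each $P_n x \in H_n(N)$. The lower bound $\|x\|_{L^2} \le \|x\|_{L^\infty}$ is immediate since $h_{O_N^+}$ is a state and $\|x\|_{L^2}^2 = h_{O_N^+}(x^*x) \le \|x^*x\|_{L^\infty} = \|x\|_{L^\infty}^2$. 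For the upper bound I will estimate the operator norm of $x$ acting by left multiplication on $L^2(O_N^+)$ by testing it against an arbitrary $\xi \in L^2(O_N^+)$, decomposing $\xi = \sum_k P_k \xi$ as well.

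First I would write $x\xi = \sum_{n,k} (P_n x)(P_k \xi)$, and use the multiplication rule from Theorem \ref{thm:Banica}: $(P_n x)(P_k \xi) \in \bigoplus_{r=0}^{\min\{n,k\}} H_{n+k-2r}(N)$. Grouping by the output level $l$, the component $P_l(x\xi)$ receives contributions only from pairs $(n,k)$ with $n \le L$, $|n-k| \le l \le n+k$ and $l \equiv n+k \pmod 2$; in particular for each fixed $l$ and each fixed $n \le L$ there is at most one relevant $k$ (namely $k$ determined by $l = n + k - 2r$ with $r = \min\{n,k\}$... more precisely the relevant $k$'s form a controlled set). Applying Theorem \ref{thm:Vergnioux} termwise and then the triangle inequality in $L^2$, I get
\[
\|P_l(x\xi)\|_{L^2} \le D_N \sum_{n,k} \|P_n x\|_{L^2}\,\|P_k \xi\|_{L^2},
\]
where the sum runs over the constrained index set described above. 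By Cauchy--Schwarz in the two variables $n$ and $k$, bounding the number of active pairs contributing to a fixed $l$ by $O(L)$ on the $n$-side, one obtains $\|P_l(x\xi)\|_{L^2} \le D_N (L+1)^{1/2}\big(\sum_n \|P_n x\|_{L^2}^2\big)^{1/2} \big(\sum_{k \in K_l} \|P_k\xi\|_{L^2}^2\big)^{1/2}$ for a suitable index set $K_l$ of size $O(L)$, and then summing $\|x\xi\|_{L^2}^2 = \sum_l \|P_l(x\xi)\|_{L^2}^2$ and using that each $P_k\xi$ appears in at most $O(L)$ of the sets $K_l$ gives $\|x\xi\|_{L^2} \le D_N (L+1)^{3/2} \|x\|_{L^2}\|\xi\|_{L^2}$. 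Taking the supremum over unit vectors $\xi$ yields $\|x\|_{L^\infty} \le D_N(\ell(x)+1)^{3/2}\|x\|_{L^2}$, since the left regular representation is isometric on $L^\infty(O_N^+)$ by faithfulness of the Haar state.

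The main obstacle is purely bookkeeping: one must carefully count, for each output level $l$, the set of pairs $(n,k)$ that can contribute, verify this set has cardinality $O(\ell(x))$ when projected onto either coordinate, and check that the two separate applications of Cauchy--Schwarz (once in $n$, once over the collection of $l$'s) each cost only a factor $(\ell(x)+1)^{1/2}$, for the stated total exponent $3/2$. There is no analytic difficulty beyond Theorem \ref{thm:Vergnioux} itself; the only subtlety is making sure the combinatorial constraints coming from the $O_N^+$ fusion rules (which force $l \equiv n+k \pmod 2$ and $|n-k| \le l \le n+k$, with $n$ capped at $\ell(x)$) are used to keep all the sums finite and the constants dimension-free apart from the single factor $D_N$.
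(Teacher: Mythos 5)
Your strategy is the same as the paper's: both deduce the corollary from Theorem \ref{thm:Vergnioux} by decomposing $x$ and $\xi$ along the spaces $H_n(N)$, invoking the fusion rules to bound the multiplicities $\#\{k : H_l(N)\subset H_n(N)H_k(N)\}$ and $\#\{l : H_l(N)\subset H_n(N)H_k(N)\}$ by $n+1$, and then applying the triangle inequality and Cauchy--Schwarz. The lower bound and the reduction to left multiplication on $L^2(O_N^+)$ are handled identically. The only substantive difference is organizational, and there your exponent bookkeeping does not add up as written. For fixed $l$ and fixed $n$ there are up to $n+1$ admissible $k$ (not ``at most one''), so the Cauchy--Schwarz step over the pairs $(n,k)$ costs a factor $(L+1)^{1/2}$ \emph{twice} --- once from the sum over $n\le L$ and once from the $k$-multiplicity per $n$ --- so the honest version of your intermediate display carries a factor $(L+1)$, not $(L+1)^{1/2}$. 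Conversely, if your stated intermediate bound with $(L+1)^{1/2}$ were correct, then the final summation over $l$ (which costs another $O(L)^{1/2}$ from the multiplicity of $l$'s per $k$) would yield a total of order $(L+1)$, not $(L+1)^{3/2}$; as it stands the chain of displayed inequalities is internally inconsistent. Done carefully, your single-pass argument does give $(L+1)^{3/2}$ up to a harmless universal constant (something like $\sqrt{2}$), which is more than enough for Proposition \ref{prop:Lpestimate}. The paper avoids this by splitting the argument in two: first the homogeneous estimate $\|x\|_{L^\infty}\le D_N(n+1)\|x\|_{L^2}$ for $x\in H_n(N)$ (where the two multiplicity bounds each contribute $(n+1)^{1/2}$), and then the sum over degrees via
\[
\sum_{n=0}^{L} D_N(n+1)\|x_n\|_{L^2}\ \le\ D_N\Bigl(\sum_{n=0}^{L}(n+1)^2\Bigr)^{1/2}\|x\|_{L^2}\ \le\ D_N(L+1)^{3/2}\|x\|_{L^2},
\]
which produces the clean constant stated in the corollary. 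You should either adopt that two-step organization or redo the pair-counting so that the powers of $(L+1)$ genuinely sum to $3/2$.
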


\begin{proof}
It suffices to prove that for each $n \in \N_0$, $x \in H_n(N)$, we have $\|x\|_{L^\infty(O_N^+)} \le D_N(n+1)\|x\|_{L^2(O_N^+)}$.  Indeed, since then any $x \in \Pol(O_N^+)$ can be written as $x = \sum_{0 \le n \le \ell(x)} x_n$ where $x_n \in H_n(N)$, which gives  \[\|x\|_{L^\infty(O_N^+)} \le \sum_{0 \le n \le \ell(x)} D_N(n+1)\|x_n\|_{L^2(O_N^+)} \le D_N(\ell(x)+1)^{3/2} \|x\|_{L^2(O_N^+)}.\]  

Now fix $x \in H_n(N)$ and $\xi \in L^2(O_N^+)$.  Then by Theorem \ref{thm:Vergnioux} we have
\begin{align*}
&\|x\xi\|_{L^2(O_N^+)}^2 = \sum_{l \in \N_0}\|P_l(x\xi)\|_{L^2(O_N^+)}^2 = \sum_{l \in \N_0}\Big\|\sum_{k \in \N_0: \ H_l(N) \subset H_n(N)H_k(N)}P_l(x P_k\xi)\Big\|_{L^2(O_N^+)}^2 \\
&\le  \sum_{l \in \N} \Big( \sum_{k \in \N_0: \ H_l(N) \subset H_n(N)H_k(N)} D_N\|x\|_{L^2(O_N^+)}\|P_k\xi\|_{L^2(O_N^+)}\Big)^{2} \\
&\le D_N^2\|x\|_{L^2(O_N^+)}^2\sum_{l \in \N_0}  \#\{k : H_l(N) \subset H_n(N)H_k(N)\} \Big(\sum_{k \in \N_0: \ H_l(N) \subset H_n(N)H_k(N)} \|P_k\xi\|_{L^2(O_N^+)}^2\Big).
\end{align*}
But by Theorem \ref{thm:Banica}, it follows that  $\#\{k \in \N_0 : H_l(N) \subset H_n(N)H_k(N)\}$ and $\#\{l \in \N_0 : H_l(N) \subset H_n(N)H_k(N)\}$ are both bounded by $n+1$.  Therefore 
\begin{align*}
&\|x\xi\|_{L^2(O_N^+)}^2 \le (n+1)D_N^2\|x\|_2^2\sum_{l \in \N_0} \Big(\sum_{k \in \N_0: \ H_l(N) \subset H_n(N)H_k(N)} \|P_k\xi\|_{L^2(O_N^+)}^2\Big)  \\
&\le D_N^2(n+1)^2\|x\|_{L^2(O_N^+)}^2 \sum_{k \in \N_0} \|P_k\xi\|_{L^2(O_N^+)}^2 = D_N^2(n+1)^2\|x\|_{L^2(O_N^+)}^2 \|\xi\|_{L^2(O_N^+)}^2. 
\end{align*}
\end{proof}

\section{Main Result} \label{sect:mainresult}

In this section, we will give a proof of Theorem \ref{thm:main}.  Our strategy will be to first use the property of rapid decay to show that for any fixed non-commutative polynomial $P \in \C\langle X_{ij}: i,j \in \N \rangle$ and any $\epsilon > 0$, there is a fixed $p = p(P,\epsilon) \in (2,\infty)$ such that the non-commutative $L^p$-norm of $P(S_N) \in \Pol(O_N^+)$ is within $\epsilon$ of its $L^\infty$-norm for all $N$.  It is an elementary fact that for each $N$, there is a $p = p(P,\epsilon,N)$ which obtains the required approximation.  They key point here is that we can select a \textit{single} $p \in(2, \infty)$ which works for all $N$.  

Using this uniform $L^p$-$L^\infty$-estimate, we are able to prove Theorem \ref{thm:main} by transferring the problem of norm convergence of polynomials to a question about convergence in distribution.       

\subsection{Uniform $L^p$-$L^\infty$-estimates}

We start with a crucial lemma.

\begin{lem} \label{lem:D_N_bdd}
Let $D_N$ be the constant appearing in Theorem \ref{thm:Vergnioux}.  Then $D_N$ can be chosen so that $\lim_{N \to \infty} D_N = 1$.  In particular, $(D_N)_{N \ge 3}$ is uniformly bounded. 
\end{lem}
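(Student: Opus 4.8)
The plan is to make the constant $D_N$ in property (RD) explicit by tracking it through Vergnioux's proof, and then show the explicit expression tends to $1$. Vergnioux's estimate for $O_N^+$ is proven by comparison with the Temperley--Lieb algebra / $SU_{-q}(2)$ picture, where $N = q + q^{-1}$ with $0 < q < 1$; crucially, as $N \to \infty$ we have $q \to 0$. The key combinatorial quantity controlling the norm of the multiplication map $H_n(N) \otimes H_k(N) \to H_l(N)$ is built from ratios of quantum dimensions $\dim_q U^k = [k+1]_q$ and the associated Clebsch--Gordan/fusion structure constants. So the first step is to recall (or re-derive) the precise form: one can take $D_N$ of the form $\sup$ over $n,k,l$ of a ratio like $\big(\tfrac{[n+1]_q[k+1]_q}{[l+1]_q}\big)^{1/2}$ times the operator norm of the relevant partial-isometry-rescaled fusion morphism, and the point is that this whole quantity is bounded by something that decays to $1$.

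The cleanest route is probably the following. First I would invoke the fact from Vergnioux \cite{Ve} (Section 4, the $O_N^+$ case) that the constant can be taken as $D_N = C \cdot \sup_{n \ge 0} \big( \text{something involving } q^{n}\big)$ — concretely, the estimate comes down to bounding sums of the form $\sum_{r} \frac{[n-2r+1]_q \cdots}{[\cdots]_q}$, which are geometric-type series in $q$, hence $\le 1 + O(q)$ uniformly. Since $q = q(N) \to 0$ as $N \to \infty$ (because $N = q + q^{-1} \to \infty$ forces $q \to 0$), each such correction term vanishes, giving $D_N = 1 + O(q(N)) \to 1$. I would state this as: the quantum-dimension ratios governing the Frobenius reciprocity maps in $\mathrm{Rep}(O_N^+)$ differ from their ``classical'' ($SO(3)$-type, i.e. $q=0$) values by terms of order $q$, and at $q = 0$ the bound is exactly $1$ (this is morally the statement that $\widehat{O_N^+}$ ``looks more and more free/tree-like'' as $N \to \infty$, with the length function becoming a genuine tree metric). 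The second step is the trivial observation that $D_N \ge 1$ always (take $x = y = 1$, or note the $L^2$-$L^\infty$ inequality direction), so $\lim D_N = 1$ combined with continuity gives uniform boundedness of $(D_N)_{N \ge 3}$; the restriction $N \ge 3$ is just to ensure $O_N^+$ has the relevant representation theory ($q < 1$ strictly, $N \ge 3 > 2$).

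The main obstacle is extracting an \emph{explicit} form of $D_N$ from \cite{Ve}: Vergnioux states property (RD) with \emph{some} constant without optimizing it, so I must either (a) redo the relevant estimate keeping constants, working in the Temperley--Lieb planar-algebra model where the Jones--Wenzl projections and their traces are explicit rational functions of $q$, and check that all the emerging bounds are of the form $1 + (\text{polynomial in } q)\cdot(\text{bounded geometric sum})$; or (b) cite a sharper version — I believe Vergnioux, or Brannan's own earlier work, or the ``property (RD) with subexponential constant'' literature, gives the decay $D_N \to 1$, in which case this lemma is essentially a citation plus the $q \to 0$ remark. I would take route (b) if a clean reference exists and otherwise route (a), confined to a short computation: the heart of it is that the norm of the map $x \mapsto P_l(xy)$ on the Peter--Weyl blocks is governed by $\|$ (rescaled) fusion morphism $\|$, the fusion morphisms for $U^n \boxtimes U^k$ are (partial) isometries up to diagonal quantum-dimension rescalings, and those rescaling factors are $\prod (\text{ratios of } [m]_q) = 1 + O(q)$ uniformly in the indices. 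Once $D_N = 1 + o(1)$ is in hand, the lemma follows immediately.

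\begin{rem}
For the application in the next subsection, all that is really needed is $\sup_{N \ge 3} D_N < \infty$ together with $\sup_{N\ge 3} D_N^{1/p} \to 1$ as $p \to \infty$ uniformly in $N$, and the latter is immediate from uniform boundedness; the stronger conclusion $D_N \to 1$ is recorded here both because it is true and because it makes transparent why the $L^p \to L^\infty$ comparison can be made uniform in $N$.
\end{rem}
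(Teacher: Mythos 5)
Your overall strategy is the same as the paper's: write $N=q+q^{-1}$, note $q\to 0$, express $D_N$ as a supremum over $(n,k,l)$ of a quantum-dimension ratio times the (inverse square of the) norm of the fusion morphism $\mc V_{n+k-2r}\to \mc V_n\otimes\mc V_k$, and show both factors tend to $1$. But as written there is a genuine gap at the heart of the argument: the claim that the fusion-morphism rescaling factors are ``$\prod(\text{ratios of }[m]_q)=1+O(q)$ uniformly in the indices'' is precisely the nontrivial point, and you defer it to either (a) redoing Vergnioux's estimate with constants or (b) citing a sharper statement. Option (b) is not available: Vergnioux establishes property (RD) with \emph{some} constant $D_N$ and does not optimize it (nor does the earlier literature), so the decay $D_N\to 1$ is exactly the new content of this lemma and cannot be obtained by citation. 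For option (a), the missing ingredient is an \emph{explicit closed formula} for $\|\phi_l^{n,k}\|$. The paper gets it by identifying $[r+1]_q^{1/2}\phi_l^{n,k}$ with a three-vertex in the Temperley--Lieb category $\TL(q+q^{-1})$ and invoking the Kauffman--Lins evaluation (Sections 9.9--9.10 of \cite{KaLi}), which yields
\[
\|\phi_l^{n,k}\|^{-2}=\prod_{s=1}^{r}\frac{[1+s]_q[n-r+s]_q[k-r+s]_q}{[l+1+s]_q[s]_q^2}
\le\Big(\prod_{s=1}^{\infty}\frac{1}{1-q^{2s}}\Big)^{3},
\]
a bound that is uniform in $(n,k,l)$ \emph{despite} the number of factors growing with $r$, because the product telescopes into a convergent infinite product tending to $1$ as $q\to 0$. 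Without some such exact formula (or an equally sharp bound), your ``$1+O(q)$ uniformly'' assertion is not justified, since a product of $r$ factors each of size $1+O(q)$ need not be $1+O(q)$ uniformly in $r$.

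Two smaller points. First, the quantum-dimension prefactor is $\big(\tfrac{[n+1]_q[k+1]_q}{[l+1]_q[r+1]_q^2}\big)^{1/2}$, not $\big(\tfrac{[n+1]_q[k+1]_q}{[l+1]_q}\big)^{1/2}$; the extra $[r+1]_q^{-2}$ matters for the elementary sandwich $(1-q^2)^3\le\cdot\le(1-q^2)^{-2}$ that shows this factor tends to $1$. Second, your closing remark is correct and worth keeping: the application in Proposition \ref{prop:Lpestimate} only needs $\sup_{N\ge 3}D_N<\infty$, since $D_N^{1/2m}\to 1$ as $m\to\infty$ uniformly in $N$ once the $D_N$ are bounded; the full statement $D_N\to 1$ is a bonus (used again in Corollary \ref{cor:normineq}).
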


\begin{proof}
Fix $n,k,l \in \N_0$, $x \in H_n(N)$ and $y \in H_k(N)$.  If $U^l$ is not equivalent to a subrepresentation of $U^n \boxtimes U^k$, then $P_l(xy) = 0$ and there is nothing to prove.  Otherwise, there exists some $0 \le r \le \min\{k,n\}$ such that $l = n+k - 2r$.  Let $\mc V_l, \mc V_k$ and $\mc V_n$ denote the Hilbert spaces on which the representations $U^l, U^k$ and $U^n$ act, respectively, and let $t_r: \C \cong \mc V_0 \to \mc V_r \otimes \mc  V_r$ be the unique (up to multiplication by $\T$) $O_N^+$-invariant  isometry(uniqueness follows from Theorem \ref{thm:Vergnioux}).  Now identify each representation $U^k$ with the ``highest weight'' subrepresentation of $U^{\boxtimes k}$ in the canonical way (see \cite{Ba0}), and let $p_k:(\C^{N})^{\otimes k} \to \mc V_k$ the orthogonal projection.  Using the map $t_r$ and the projections $p_n,p_k,p_l$ we can define the following canonical $O_N^+$-invariant contraction:  \[\phi_{l}^{n,k} := (p_n \otimes p_k)(\id_{(\C^N)^{\otimes (n-r)}} \otimes t_r \otimes \id_{(\C^N)^{\otimes (k-r)}})p_l: \mc V_l\to \mc V_n \otimes \mc V_k.\]

According to the calculations in Section 4.3 of \cite{Ve} (see Section 4.2 of \cite{BrQAGs} for an analogous calculation for quantum permutation groups), we have
\[\|P_l(xy)\|_{L^\infty(O_N^+)} \le\Big(\frac{\dim \mc V_k\dim \mc V_n}{\dim \mc V_l (\dim \mc V_r)^2}\Big)^{1/2}\frac{\|x\|_{L^2(O_N^+)}\|y\|_{L^2(O_N^+)}}{ \|\phi_{l}^{n,k}\|^2}.\] 

Fix $0 < q < 1$ such that $N = q+q^{-1}$.  Note that $q \to 0$ as $N \to \infty$.  In \cite{Ba0} it is shown that $\dim \mc V_k = [k+1]_q$ for all $k \in \N_0$, so we can take $D_N = \sup_{(n,k,l) \in \N_0^3} \Big(\frac{[k+1]_q[n+1]_q}{[l+1]_q[r+1]_q^2}\Big)^{1/2} \|\phi_{l}^{n,k}\|^{-2}$ and analyze its large $N$ ($\iff$ small $q$) behavior.

As a first observation, note that \[(1-q^2)^3 \le\frac{(1-q^2)(1-q^{2n+2})(1-q^{2k+2})}{(1-q^{2r+2})^2(1-q^{2l+2})} = \frac{[k+1]_q[n+1]_q}{[l+1]_q[r+1]_q^2} 
\le (1-q^2)^{-2}. \] So $\lim_{q \to 0}\sup_{(n,k,l) \in \N_0^3} \Big(\frac{[k+1]_q[n+1]_q}{[l+1]_q[r+1]_q^2}\Big)^{1/2} = 1$ and it suffices to restrict our attention to the quantity $ \|\phi_{l}^{n,k}\|^{-2}$. 

In \cite{Ba0}, T. Banica showed that the monoidal C$^\ast$-tensor category generated by the
fundamental representation $U = [u_{ij}^{(N)}]$ of $O_N^+$ is isomorphic to the Temperley-Lieb category $\TL(q+q^{-1})$.  Through this isomorphism, the linear map $[r+1]_q^{1/2}\phi_{l}^{n,k}$ defined above corresponds to a \textit{three-vertex} with parameters $(n,k,l)$ in $\TL(q+q^{-1})$.  See \cite{KaLi} for the definition of $\TL(q+q^{-1})$ and  three-vertices.  In particular, the results of Section 9.9--9.10 in \cite{KaLi} give an explicit  expression for the norm of a three-vertex with parameters $(n,k,l)$.  Translating this result back to $O_N^+$, we obtain  \begin{align*}
1 &\le  \|\phi_{l}^{n,k}\|^{-2} = \frac{[r+1]_q[l+1]_q![n]_q![k]_q!}{[l+1 +r]_q![n-r]_q![k-r]_q![r]_q!}\\
&=\prod_{s=1}^r \frac{[1+s]_q[n-r+s]_q[k-r+s]_q}{[l+1+s]_q[s]_q^2} \\
&= \prod_{s=1}^r \frac{(1-q^{2+2s})(1-q^{2n-2n+2s})(1-q^{2k-2r+2s})}{(1-q^{2l+2+2s})(1-q^{2s})^2} \\
& \le  \Big(\prod_{s=1}^r \frac{1}{1-q^{2s}}\Big)^3 \le  \Big(\prod_{s=1}^\infty \frac{1}{1-q^{2s}}\Big)^3. 
\end{align*}
Since this last infinite product is finite and converges to $1$ as $q \to 0$, it follows that $\lim_{N \to \infty} D_N = 1$.
\end{proof}

We are now ready to state our result on uniform $L^p$-$L^\infty$ estimates.  In the following, recall that $S_N = \{\sqrt{N}u_{ij}^{(N)}: 1 \le i,j \le N\} \cup \{0: i,j > N\}$ denotes the family of standard normalized generators of $\Pol(O_N^+)$.

\begin{prop} \label{prop:Lpestimate}
Let $P \in \C\langle X_{ij}: i,j \in \N \rangle$ and $\epsilon > 0$.  Then there exists an even integer $p = p(P,\epsilon) \in 2\N$ such that 
\[\|P(S_N)\|_{L^\infty(O_N^+)} \le (1 + \epsilon)\|P(S_N)\|_{L^p(O_N^+)} \qquad (N \ge 3).\]    
\end{prop}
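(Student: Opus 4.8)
The plan is to leverage Corollary~\ref{cor:vergnioux} together with Lemma~\ref{lem:D_N_bdd} to control the $L^\infty$-norm of $P(S_N)$ by a power of its $L^2$-norm with a constant that is uniform in $N$, and then to interpolate between $L^2$ and $L^\infty$ via an $L^p$-norm for $p$ even and large. First I would observe that for a fixed $P \in \C\langle X_{ij} : i,j \in \N\rangle$, the length $\ell(P(S_N))$ is bounded by the degree of $P$, say $d = \deg P$, independently of $N$: indeed $\sqrt{N}u_{ij}^{(N)} \in H_1(N)$, and by the multiplication rule in Theorem~\ref{thm:Banica} a product of $m$ generators lies in $\bigoplus_{0 \le k \le m} H_k(N)$, so $P(S_N) \in \bigoplus_{0 \le k \le d} H_k(N)$. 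Hence Corollary~\ref{cor:vergnioux} gives
\[
\|P(S_N)\|_{L^\infty(O_N^+)} \le D_N (d+1)^{3/2} \|P(S_N)\|_{L^2(O_N^+)},
\]
and by Lemma~\ref{lem:D_N_bdd} there is a constant $D = \sup_{N \ge 3} D_N < \infty$, so in fact $\|P(S_N)\|_{L^\infty(O_N^+)} \le C_{P} \|P(S_N)\|_{L^2(O_N^+)}$ with $C_P := D(d+1)^{3/2}$ independent of $N$.

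Next I would apply this bound not to $P$ itself but to $Q = P^* P$ (formally, the polynomial obtained by reversing words and conjugating coefficients so that $Q(S_N) = P(S_N)^* P(S_N)$, using that the generators are self-adjoint). Since $h_{O_N^+}$ is a faithful trace on $\Pol(O_N^+)$ and $Q(S_N)$ is a positive element of degree $\le 2d$, iterating the Cauchy--Schwarz / power-trick gives the standard estimate relating operator norm to $L^{2^m}$-norms. Concretely, writing $a = P(S_N)^*P(S_N) \ge 0$ and $\tau = h_{O_N^+}$, one has $\|a\|_{L^\infty}^2 = \|a^2\|_{L^\infty} \le C' \|a^2\|_{L^2} = C' \|a\|_{L^4}^2$ where $C'$ is the uniform constant attached to the polynomial $Q^*Q$ (of degree $\le 4d$); more generally $\|a\|_{L^\infty}^{2^m} \le C_m \|a\|_{L^{2^{m+1}}}^{2^m}$ with $C_m$ a uniform constant depending only on $P$ and $m$. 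Translating back to $P$, since $\|P(S_N)\|_{L^{2^{m+1}}}^2 = \|P(S_N)^*P(S_N)\|_{L^{2^m}}$, this yields
\[
\|P(S_N)\|_{L^\infty(O_N^+)} \le C_m^{1/2^m}\, \|P(S_N)\|_{L^{2^{m+1}}(O_N^+)},
\]
where each $C_m$ is bounded uniformly in $N$ because each invocation of the length bound plus Lemma~\ref{lem:D_N_bdd} produces an $N$-independent constant (the relevant polynomials $Q, Q^*Q, \ldots$ are obtained from $P$ alone, and their degrees grow but do not depend on $N$).

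Finally I would choose $m$ large enough that $C_m^{1/2^m} \le 1 + \epsilon$ — this is possible since $C_m \le D^{c(m)} (4^m d + 1)^{(3/2) c(m)}$ for some explicit exponent $c(m)$ that grows only polynomially in comparison to the doubly-exponential $2^m$ in the root, so $C_m^{1/2^m} \to 1$ as $m \to \infty$ uniformly in $N$ — and set $p = 2^{m+1} \in 2\N$. This gives $\|P(S_N)\|_{L^\infty(O_N^+)} \le (1+\epsilon)\|P(S_N)\|_{L^p(O_N^+)}$ for all $N \ge 3$, as desired. The main point to get right — and the step I expect to need the most care — is the bookkeeping showing that the constant $C_m$ in the iterated inequality remains bounded in $N$: one must track that at each doubling step the polynomial under consideration (built from $P$ by the $*$-product construction) has degree independent of $N$, so that Corollary~\ref{cor:vergnioux} applies with length bound depending only on $m$ and $\deg P$, and then that $\sup_{N\ge 3} D_N < \infty$ from Lemma~\ref{lem:D_N_bdd} absorbs all the $N$-dependence; the competing growth rates (polynomial-in-$4^m$ inside versus $2^m$-th root outside) must be compared to conclude $C_m^{1/2^m} \to 1$.
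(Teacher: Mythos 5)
Your proposal is correct and follows essentially the same route as the paper: bound $\ell\bigl((P(S_N)^*P(S_N))^m\bigr)$ by $2m\deg P$ independently of $N$, apply Corollary \ref{cor:vergnioux} once to this power, use Lemma \ref{lem:D_N_bdd} to make the constant uniform in $N$, and observe that its $2m$-th root tends to $1$. The only cosmetic differences are that you take $p$ a power of $2$ rather than $p=4m$, and your ``iteration'' framing is really a single application of the rapid-decay inequality to $(x_N^*x_N)^{2^m}$, exactly as in the paper.
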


\begin{proof}
Let $r = \deg P$ and $x_N = P(S_N) \in \Pol(O_N^+)$.  Then $\ell(x_N) \le r$ and $\ell((x_N^*x_N)^m) \le 2rm$ for all $m \in \N$.  Indeed, this is an immediate consequence of the fact that $x_N$ and $(x_N^*x_N)^m$ lie in the linear span of the matrix elements of the representations $\{U^{\boxtimes k}\}_{0 \le k \le r}$ and $\{U^{\boxtimes k}\}_{0 \le k \le 2rm}$, respectively.   Fixing $m \in \N$ and applying Corollary \ref{cor:vergnioux}, we then have
\begin{align*}
\|x_N\|_{L^\infty(O_N^+)} &=  \|(x_N^*x_N)^m\|_{L^\infty(O_N^+)}^{1/2m} \le \Big(D_N(\ell((x_N^*x_N)^m) + 1)^{3/2}\|(x_N^*x_N)^m\|_{L^2(O_N^+)}\Big)^{1/2m} \\
&= \Big(D_N(\ell((x_N^*x_N)^m) + 1)^{3/2}\Big)^{1/2m} \|x_N\|_{L^{4m}(O_N^+)}\\
&\le \Big(D_N(2rm+ 1)^{3/2}\Big)^{1/2m} \|x_N\|_{L^{4m}(O_N^+)}. 
\end{align*}
Applying Lemma \ref{lem:D_N_bdd},  we can find an $m = m(r,\epsilon) \in \N$ such that $D_N^{1/2m}(2rm+1)^{3/4m} \le 1 +\epsilon$ for all $N \ge 3$.  Taking $p = 4m$ will then do the job.
\end{proof}

The proof of Theorem \ref{thm:main} now follows easily.

\begin{proof}[Proof of Theorem \ref{thm:main}]
Let $x_N = P(S_N) \in \Pol(O_N^+)$, $x = P(S) \in (M,\tau)$, and $\epsilon >0$. Choose $m \in \N$ large enough so that $\|x\|_{L^{2m}(M)} 
\ge \|x\|_{L^\infty(M)} - \epsilon$.  Applying Theorem \ref{thm:BaCo}, we have \[ \|x\|_{L^{2m}(M)}  = \tau((x^*x)^m)^{1/2m} = \lim_{N \to \infty} h_{O_N^+}((x_N^*x_N)^m)^{1/2m} = \lim_{N \to \infty} \|x_N\|_{L^{4m}(O_N^+)},\] which yields 
\begin{align*}\|x\|_{L^\infty(M)} - \epsilon & \le  \|x\|_{L^{2m}(M)}  = \lim_{N \to \infty} \|x_N\|_{L^{2m}(O_N^+)} \le \liminf_{N \to \infty} \|x_N\|_{L^\infty(O_N^+)}.
\end{align*} 
On the other hand, by Proposition \ref{prop:Lpestimate}, there is a $p = p(P,\epsilon) \in 2\N$ such that  
\begin{align*}
\|x_N\|_{L^\infty(O_N^+)} &\le (1+\epsilon)\|x_N\|_{L^{p}(O_N^+)} \qquad (N \text{ sufficiently large}).
\end{align*}
Applying Theorem \ref{thm:BaCo} once again, we obtain
\[\limsup_{N \to \infty} \|x_N\|_{L^\infty(O_N^+)} \le \limsup_{N \to \infty}(1+\epsilon)\|x_N\|_{L^{p}(O_N^+)} = (1+\epsilon)\|x\|_{L^{p}(M)} \le (1+\epsilon)\|x\|_{L^\infty(M)}.\]
As $\epsilon >0$ was arbitrary, we conclude that $\lim_{N \to \infty} \|x_N\|_{L^\infty(O_N^+)} = \|x\|_{L^\infty(M)}$.
\end{proof}

\section{Some Applications and Consequences} \label{sect:applications}

\subsection{Strong asymptotic freeness for $U_N^+$}

Let $X^{(k)} = \{x^{(k)}_i\}_{i \in I}$ and $Y^{(k)} = \{y^{(k)}_j\}_{j \in I}$ be two sequences of families non-commutative random variables and assume that $(X^{(k)})_{k \in \N}$ and $(Y^{(k)})_{k \in \N}$ converge strongly in distribution to families $X$ and $Y$, respectively.  It has recently been shown by P. Skoufranis \cite{Sk} (see also \cite{Pi12} for an alternate proof) that, if in addition, we assume that the familes $X^{(k)}$ and $Y^{(k)}$ are free for each $k$, then $\{X^{(k)}, Y^{(k)}\} \longrightarrow \{X,Y\}$ strongly in distribution.  

Let $\{v_{ij}^{(N)}\}_{1\le i,j \le N}$ denote the canonical generators of $\Pol(U_N^+)$.  It was shown in \cite{Ba} that the joint distribution $\{v_{ij}^{(N)}\}_{1\le i,j \le N}$ (with respect to the Haar state) can be modeled in terms of the ``free complexification'' of the generators $\{u_{ij}^{(N)}\}_{1\le i,j \le N}$ of $\Pol(O_N^+)$.  More precisely, let $z$ be a  unitary in a NCPS $(A,\varphi)$ whose spectral measure relative to $\varphi$ is the Haar measure on $\T$ (i.e., $z$ is a \textit{Haar unitary}), and assume $z$ is free from $\{u_{ij}^{(N)}\}_{1\le i,j \le N}$.   Then the families \[\{v_{ij}^{(N)}\}_{1\le i,j \le N}  \quad \& \quad \{zu_{ij}^{(N)}\}_{1\le i,j \le N} \]  are identically distributed (i.e., have the same joint $\ast$-moments).

Combining the preceding two paragraphs with Theorem \ref{thm:main} yields the following corollary.  

\begin{cor} \label{cor:unitary}
Let $C = \{c_{ij}:  i,j \in \N\}$ be a standard free circular system in a finite von Neumann algebra $(M,\tau)$ and $\{v_{ij}^{(N)}: 1 \le i,j \le N\}$ be the standard generators of $\Pol(U_N^+)$.  Then \[\{\sqrt{N}v_{ij}^{(N)}: 1 \le i,j \le N\}  \cup \{0: i,j > N\} \longrightarrow C \] strongly in distribution as $N \to \infty$.  
\end{cor}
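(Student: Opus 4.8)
The plan is to deduce Corollary~\ref{cor:unitary} by combining three ingredients already set up in the text: Theorem~\ref{thm:main} (strong convergence for $O_N^+$), Banica's free-complexification model for $\Pol(U_N^+)$, and the Skoufranis--Pisier result that strong convergence is stable under taking free products. First I would record the trivial observation that a Haar unitary $z$ is a single fixed random variable, so the ``constant sequence'' $(z,z,z,\dots)$ converges strongly in distribution to $z$ (the $L^p$ and $L^\infty$ norms of a polynomial in $z$ are literally independent of $N$). Likewise, $S_N = \{\sqrt N u_{ij}^{(N)}\} \cup \{0\} \to S$ strongly by Theorem~\ref{thm:main}. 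Since $z$ is free from $\{u_{ij}^{(N)}\}$ in the NCPS $(A,\varphi)$ for every $N$, the hypothesis of the Skoufranis/Pisier theorem is met, and we conclude that the joint family $\{z\} \cup S_N \to \{z\} \cup S$ strongly in distribution, where in the limit $z$ is a Haar unitary free from the semicircular system $S$.

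The second step is to push this strong convergence through the (polynomial, $\ast$-preserving) map that sends the joint family $(z, \{s_{ij}\})$ to $\{z s_{ij}\}$, respectively $(z,\{\sqrt N u_{ij}^{(N)}\})$ to $\{\sqrt N z u_{ij}^{(N)}\}$. Concretely, any noncommutative $\ast$-polynomial $Q$ in the variables $X_{ij}$ can be rewritten, after substituting $X_{ij} \mapsto z X_{ij}$ (and $X_{ij}^* \mapsto X_{ij}^* z^*$), as a noncommutative $\ast$-polynomial $\widetilde Q$ in the enlarged variable set $\{z\} \cup \{X_{ij}\}$; thus $\|Q(\{\sqrt N z u_{ij}^{(N)}\})\|_\infty = \|\widetilde Q(\{z\}\cup S_N)\|_\infty$ and similarly for the limit, so strong convergence of the enlarged families gives strong convergence of $\{\sqrt N z u_{ij}^{(N)}\} \to \{z s_{ij}\}$. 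Finally, by Banica's result the families $\{\sqrt N v_{ij}^{(N)}\}$ and $\{\sqrt N z u_{ij}^{(N)}\}$ are identically distributed, hence generate isomorphic $\ast$-probability spaces; in particular all $L^\infty$ and $L^p$ norms of $\ast$-polynomials agree, so $\{\sqrt N v_{ij}^{(N)}\} \cup \{0\}$ also converges strongly in distribution to $\{z s_{ij}\}$. It remains only to identify the limit: by definition $z s_{ij}$ has the same distribution as $c_{ij}$ when $\{s_1,s_2\}$ is a standard free semicircular pair free from $z$ --- indeed this is precisely the polar-type description of a circular element, and freeness of the family $\{z s_{ij}\}_{ij}$ follows from freeness of $z$ from $S$ together with freeness of the $s_{ij}$. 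So the limiting family is a standard free circular system $C = \{c_{ij}\}$, which is the assertion.

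The main obstacle, such as it is, is bookkeeping rather than hard analysis: one must be careful that Banica's model is a genuine $\ast$-distributional identity (so that it transports not just $\ast$-moments but also operator norms, which is automatic since equality of all $\ast$-moments on a tracial NCPS determines the generated von Neumann algebra up to trace-preserving isomorphism), and that the free-complexification variable $z$ can be taken free from $\{u_{ij}^{(N)}\}$ \emph{uniformly in $N$} --- but this is exactly how the model is stated in the excerpt, and one can always realize it inside a single large NCPS (e.g. a free product) so that the same $z$ works for all $N$. A secondary point is to verify that $z s_{ij}$ is indeed circular and that the family is free; this is a standard fact about free complexification of semicircular elements, for which one may cite \cite{NiSp}, but it can also be checked directly by noting $z s_{ij}$ has the same $\ast$-distribution as $(z s_{ij}')$ for an independent copy and comparing with $\frac{s_1 + i s_2}{\sqrt 2}$. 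None of these steps requires any new estimate beyond Theorem~\ref{thm:main}.
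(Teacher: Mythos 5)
Your proposal is correct and follows essentially the same route as the paper, which likewise deduces the corollary by combining Theorem \ref{thm:main}, the Skoufranis--Pisier stability of strong convergence under freeness (applied to adjoin a Haar unitary $z$), and Banica's free-complexification model identifying the $\ast$-distribution of $\{v_{ij}^{(N)}\}$ with that of $\{zu_{ij}^{(N)}\}$. The paper leaves the bookkeeping implicit, whereas you spell it out; there is no substantive difference.
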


\subsection{Polynomials with matrix coefficients}

We remark that by a standard ultraproduct technique (see for example Proposition 7.3 in \cite{Ma12}) that the strong convergence of a sequence of families of random variables is equivalent to the (a priori stronger) condition of norm convergence for non-commutative polynomials with matrix coefficients.  

\begin{cor} \label{cor:matrix_coeff}
Let $S = \{s_{ij}:  i,j \in \N\}$ be a standard free semicircular system in a finite von Neumann algebra $(M,\tau)$ and $S_N =\{\sqrt{N} u_{ij}^{(N)}: 1 \le i,j \le N\}$ be the standard normalized generators of $\Pol(O_N^+)$.  Then for any $k \in \N$ and any non-commutative polynomial $P \in M_k(\C) \otimes \C\langle X_{ij}:  i,j \in \N \rangle$, we have \[ \|P(S_N)\|_{M_k(\C) \otimes L^\infty(O_N^+)} \to \|P(S)\|_{M_k(\C) \otimes M}.\] 
\end{cor}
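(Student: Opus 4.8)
The plan is to deduce the corollary from Theorem~\ref{thm:main} by a C$^*$-ultraproduct argument, which is what the ``standard ultraproduct technique'' cited above amounts to. Fix a free ultrafilter $\mathcal U$ on $\N$ and form the C$^*$-algebra ultraproduct $\mathcal B_{\mathcal U} := \prod_{N\to\mathcal U} L^\infty(O_N^+)$, i.e. the quotient of the $\ell^\infty$-direct product $\prod_N L^\infty(O_N^+)$ (with the sup-norm) by the ideal $\{(x_N)_N : \lim_{N\to\mathcal U}\|x_N\| = 0\}$; by construction $\|[(x_N)_N]\|_{\mathcal B_{\mathcal U}} = \lim_{N\to\mathcal U}\|x_N\|$ for every bounded sequence. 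Since $\|\sqrt N u_{ij}^{(N)}\|_{L^\infty(O_N^+)}$ is bounded in $N$ (it converges to $\|s_{ij}\| = 2$ by Theorem~\ref{thm:main} applied to $P = X_{ij}$), the elements $\hat s_{ij} := [(\sqrt N u_{ij}^{(N)})_N] \in \mathcal B_{\mathcal U}$ are well defined; set $\hat S = \{\hat s_{ij}:i,j\in\N\}$.

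First I would note that for every $\ast$-polynomial $P \in \C\langle X_{ij}:i,j\in\N\rangle$ (in the self-adjoint variables $X_{ij}$) one has $P(\hat S) = [(P(S_N))_N]$, hence $\|P(\hat S)\|_{\mathcal B_{\mathcal U}} = \lim_{N\to\mathcal U}\|P(S_N)\|_{L^\infty(O_N^+)} = \|P(S)\|_{L^\infty(M)}$, the last equality being Theorem~\ref{thm:main} together with the fact that a convergent sequence has all its ultralimits equal to its limit. Thus $P \mapsto \|P(\hat S)\|$ and $P \mapsto \|P(S)\|$ are the \emph{same} C$^*$-seminorm on the $\ast$-algebra $\C\langle X_{ij}:i,j\in\N\rangle$; quotienting by the common null ideal and completing produces a $\ast$-isomorphism $\Psi \colon C^*(\hat S) \to C^*(S) \subseteq M$ with $\Psi(\hat s_{ij}) = s_{ij}$.

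Next I would amplify. For fixed $k$ one has an isometric identification $M_k(\mathcal B_{\mathcal U}) \cong \prod_{N\to\mathcal U} M_k(L^\infty(O_N^+))$ (taking $M_k$ commutes with the quotient defining the ultraproduct), under which a matrix polynomial $Q \in M_k(\C)\otimes\C\langle X_{ij}:i,j\in\N\rangle$ evaluated at $\hat S$ becomes the class $[(Q(S_N))_N]$, so that $\|Q(\hat S)\|_{M_k(\mathcal B_{\mathcal U})} = \lim_{N\to\mathcal U}\|Q(S_N)\|_{M_k(\C)\otimes L^\infty(O_N^+)}$. On the other hand $\mathrm{id}_{M_k}\otimes\Psi$ is a $\ast$-isomorphism from $M_k(C^*(\hat S))$ onto $M_k(C^*(S))$, hence isometric, and it carries $Q(\hat S)$ to $Q(S)$; since $M_k(C^*(\hat S))$ is a C$^*$-subalgebra of $M_k(\mathcal B_{\mathcal U})$, we conclude $\lim_{N\to\mathcal U}\|Q(S_N)\|_{M_k(\C)\otimes L^\infty(O_N^+)} = \|Q(S)\|_{M_k(\C)\otimes M}$. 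As the sequence $\big(\|Q(S_N)\|_{M_k(\C)\otimes L^\infty(O_N^+)}\big)_N$ is bounded and this holds for every free ultrafilter $\mathcal U$, the sequence converges to $\|Q(S)\|_{M_k(\C)\otimes M}$, which is the assertion.

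The one genuinely delicate point is verifying that the two C$^*$-seminorms on $\C\langle X_{ij}\rangle$ coincide and therefore induce a $\ast$-isomorphism of the generated C$^*$-algebras: this is precisely where strong convergence is used, and it is essential that Theorem~\ref{thm:main} (not merely convergence in distribution) is in hand, since otherwise one would only get $\|P(\hat S)\| \ge \|P(S)\|$. Once this is established, matrix amplification is automatic because $\ast$-isomorphisms amplify to $\ast$-isomorphisms; the remaining ingredients (the ultraproduct norm formula, the identification $M_k(\mathcal B_{\mathcal U}) \cong \prod_{\mathcal U} M_k(\cdot)$, and passing from ultralimits to an honest limit) are routine. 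A more hands-on alternative, avoiding ultraproducts, would be to rerun the proof of Proposition~\ref{prop:Lpestimate} and of Theorem~\ref{thm:main} directly for $Q(S_N)$, using a matrix-amplified version of Corollary~\ref{cor:vergnioux} (valid with the same $D_N$ up to a harmless factor of $k$, as $k$ is fixed) and applying Theorem~\ref{thm:BaCo} to the scalar joint moments that appear in the expansion of $(\mathrm{tr}_k\otimes h_{O_N^+})\big((Q(S_N)^*Q(S_N))^m\big)$.
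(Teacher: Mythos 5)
Your argument is correct and is exactly the ``standard ultraproduct technique'' the paper invokes (Proposition 7.3 of \cite{Ma12}): strong convergence identifies the C$^*$-algebra generated by the sequence in the ultraproduct with $C^*(S)$ via a $\ast$-isomorphism, which is automatically completely isometric, and matrix amplification commutes with the ultraproduct. The paper offers no further detail, so your write-up is simply a careful expansion of the same proof.
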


Of course, the analogous result also holds for the generators of $U_N^+$.

\subsection{$L^2$-$L^\infty$ norm-inequalities for free semicircular systems}

By taking limits in Corollary \ref{cor:vergnioux}, and applying Lemma \ref{lem:D_N_bdd} and Theorem \ref{thm:main}, we obtain the following $L^2$-$L^\infty$ norm inequality for a free semicircular system.  This inequality was first obtained by M. Bozejko \cite{Bo91} and is re-proved by P. Biane and R. Speicher using combinatorial methods in \cite{BiSp}.

\begin{cor} \label{cor:normineq}
Let $S = \{s_{i}: i \in I\}$ be a standard free semicircular system in a finite von Neumann algebra $(M,\tau)$ and let $P \in \C\langle X_{i}: i \in I \rangle$. Then 
\[\|P(S)\|_{L^2(M)} \le \|P(S)\|_{L^\infty(M)} \le (\deg P +1)^{3/2} \|P(S)\|_{L^2(M)}.\]
\end{cor}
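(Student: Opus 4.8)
The plan is to obtain Corollary \ref{cor:normineq} by transferring the norm inequality of Corollary \ref{cor:vergnioux} from the quantum groups $O_N^+$ to the limiting free semicircular system, using Theorem \ref{thm:main} together with the asymptotic behavior of the constants $D_N$ from Lemma \ref{lem:D_N_bdd}. The left-hand inequality $\|P(S)\|_{L^2(M)} \le \|P(S)\|_{L^\infty(M)}$ is trivial: since $\tau$ is a faithful normal trace on $M$ and $P(S) \in M$, we have $\|P(S)\|_{L^2(M)}^2 = \tau(P(S)^*P(S)) \le \|P(S)^*P(S)\|_{L^\infty(M)} = \|P(S)\|_{L^\infty(M)}^2$. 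So the content is entirely in the right-hand inequality.

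For the right-hand inequality, first I would fix a polynomial $P \in \C\langle X_i : i \in I\rangle$. Since $P$ involves only finitely many variables, I may assume $I$ is finite and relabel so that $P \in \C\langle X_{ij} : i,j \in \N\rangle$ in the notation of the earlier sections, with $S = \{s_{ij}\}$ a sub-family of a standard free semicircular system, realized inside $S_N = \{\sqrt{N}u_{ij}^{(N)}\} \cup \{0\}$ for $N$ large. Set $x_N = P(S_N) \in \Pol(O_N^+)$ and $x = P(S) \in M$. The degree of $P$ as a noncommutative polynomial bounds $\ell(x_N) \le \deg P$ uniformly in $N$ (this is exactly the observation made at the start of the proof of Proposition \ref{prop:Lpestimate}), so Corollary \ref{cor:vergnioux} gives
\[
\|x_N\|_{L^\infty(O_N^+)} \le D_N(\deg P + 1)^{3/2}\|x_N\|_{L^2(O_N^+)} \qquad (N \ge 3).
\]
Now I take $N \to \infty$. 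By Theorem \ref{thm:main}, $\|x_N\|_{L^\infty(O_N^+)} \to \|x\|_{L^\infty(M)}$. By Theorem \ref{thm:BaCo} (convergence in distribution applied to the polynomial $P^*P$, equivalently the $m=1$ case of the $L^{2m}$-computation in the proof of Theorem \ref{thm:main}), $\|x_N\|_{L^2(O_N^+)} = h_{O_N^+}(x_N^*x_N)^{1/2} \to \tau(x^*x)^{1/2} = \|x\|_{L^2(M)}$. And by Lemma \ref{lem:D_N_bdd}, $D_N \to 1$. Passing to the limit in the displayed inequality yields $\|x\|_{L^\infty(M)} \le (\deg P + 1)^{3/2}\|x\|_{L^2(M)}$, which is exactly what is claimed.

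There is essentially no obstacle here; the corollary is a formal consequence of results already established, and the only point requiring minor care is the bookkeeping in the first paragraph — reducing to finitely many variables and checking that $\deg P$ (and not something $N$-dependent) bounds $\ell(P(S_N))$ uniformly, which is immediate from the fusion rules in Theorem \ref{thm:Banica} since each $u_{ij}^{(N)} \in H_1(N)$ and a degree-$r$ monomial lies in $\bigoplus_{0 \le k \le r} H_k(N)$. One should also note that the inequality is stated for an arbitrary standard free semicircular system, but since any two such systems (of the same cardinality) are isomorphic as $*$-distributions and the von Neumann algebra they generate with its trace is determined up to trace-preserving isomorphism, it suffices to prove it for the particular realization inside the ultraproduct-type model used above; alternatively one simply observes that both sides of the inequality depend only on the joint $*$-distribution of $S$, which is fixed.
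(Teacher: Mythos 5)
Your proposal is correct and is precisely the argument the paper intends: the paper's ``proof'' is the one-sentence remark that the corollary follows by taking limits in Corollary \ref{cor:vergnioux} using Lemma \ref{lem:D_N_bdd} and Theorem \ref{thm:main}, and you have simply written out that limit argument in full, including the uniform bound $\ell(P(S_N))\le\deg P$ and the $L^2$-convergence from Theorem \ref{thm:BaCo}. No discrepancies to report.
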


\subsection{Concluding remarks}
It would be natural to try to adapt the arguments of this paper to establish the strong convergence of certain polynomial functions over other classes of quantum groups, such as S.  Wang's quantum permutation groups $S_N^+$ \cite{Wa98} or the free easy quantum groups studied by T. Banica and R. Speicher in \cite{BaSp09}.  For the quantum permutation groups $S_N^+$, all of the tools are in place: a Weingarten calculus for polynomial integrals over  $S_N^+$ was developed by T. Banica and B. Collins in \cite{BaCo07}, and the author proved a property (RD) result for $S_N^+$ in \cite{BrQAGs}.  Unfortunately, the analogue of Proposition \ref{prop:Lpestimate} that one would require still does not follow immediately.  This is because the constant $D_N$ derived in our property (RD) result for $S_N^+$ turns out to grow \textit{quadratically} in $N$!  It is, however, quite plausible that a more precise analysis in the proof of property (RD) for $S_N^+$ could still yield a uniformly bounded sequence of constants $D_N$ associated to $S_N^+$.


\begin{thebibliography}{99}

\bibitem{AuLa03} S. Aubert and C. S. Lam, {\it Invariant integration over the unitary group},  J. Math. Phys. 44 (2003), 6112--6131.

\bibitem{Ba0} T. Banica, {\it Th\'eorie des repr\'esentations du groupe quantique compact libre $O(n)$},  C. R. Acad. Sci. Paris S\'er. I Math. 322 (1996), no. 3, 241--244.

\bibitem{Ba} T. Banica, {\it Le groupe quantique compact libre $U(n)$}, Comm. Math. Phys. 190 (1997), 143--172.

\bibitem{BaCo} T. Banica and B. Collins, {\it  Integration over compact quantum groups},  Publ. Res. Inst. Math. Sci. 43 (2007), 277--302.

\bibitem{BaCo07} T. Banica and B. Collins, {\it Integration over quantum permutation groups}, J. Funct. Anal. 242 (2007), 641--657. 

\bibitem{BaCoZJ} T. Banica, B. Collins and P. Zinn-Justin, {\it Spectral analysis of the free orthogonal matrix},   Int. Math. Res. Notices 17 (2009), 3286-3309.

\bibitem{BaCuSp} T. Banica, S. Curran and R. Speicher,  {\it De Finetti theorems for easy quantum groups},  Ann. Probab. 40 (2012), 401--435. 

\bibitem{BaSp09} T. Banica and R. Speicher, {\it Liberation of orthogonal Lie groups}, Adv. Math. 222 (2009), 1461--1501.

\bibitem{BeVo} H. Bercovici and D. Voiculescu, { \it Superconvergence to the central limit and failure of the Cram\'er theorem for free random variables}, Probab. Theory Related Fields 103 (1995), 215--222. 

\bibitem{BiSp} P. Biane and R. Speicher, { \it
Stochastic calculus with respect to free Brownian motion and analysis on Wigner space},
Probab. Theory Relat. Fields 112 (1998), 373--409. 

\bibitem{Bo91} M.Bozejko, { \it A q-deformed probability, Nelson's
 inequality and central limit theorems}, Nonlinear fields, classical, random, semiclassical (P. Garbecaki and Z. Popowci, eds.), World Scientific, Singapore (1991), 312--335.

\bibitem{BrQAGs} M. Brannan, {\it Reduced operator algebras of trace-preserving quantum automorphism groups}. Preprint (2012), arXiv:1202.5020.

\bibitem{BrAP} M. Brannan, {\it Approximation properties for free orthogonal and free unitary quantum groups}, J. Reine Angew. Math. 672 (2012), 223--251.

\bibitem{Co03} B. Collins, {\it Moments and Cumulants of Polynomial random variables on unitary groups, the Itzykson-Zuber integral and free probability}, Int. Math. Res. Not. 17 (2003), 953--982. 

\bibitem{CoMa12} B. Collins and C. Male, {\it The strong asymptotic freeness of Haar and deterministic matrices}. Ann. Sci. de l'ENS (2013), to appear.

\bibitem{CoSn06} B. Collins and Piotr Sniady, {\it Integration with respect to the Haar measure on unitary, orthogonal and symplectic group},  Comm. Math. Phys. 264 (2006), 773--795.

\bibitem{DiFr87} P. Diaconis and D. Freedman, { \it A dozen de Finetti-style results in search of a theory}, Ann. Inst. H. Poincare Probab. Statist., 23 (1987), 397--423.

\bibitem{Fr13} A. Freslon, {\it Examples of weakly amenable discrete quantum groups}, J. Funct. Anal. 265 (2013), 2164--2187.

\bibitem{Ha}  U. Haagerup, {\it An example of a nonnuclear C$^{\ast}$-algebra, which has the metric approximation property},  Invent. Math.  50 (1978/79), 279--293.

\bibitem{HaTh05} U. Haagerup and S. Thorbjornson, {\it A new application of random matrices: $\text{Ext}(C^\ast_{\text{red}}(\F_2))$ is not a group}, Ann. Math. 162 (2005), 711--775. 

\bibitem{Is12} Y. Isono, {\it Examples of factors which have no Cartan subalgebras}. Preprint (2012), arXiv:1209.1728.

\bibitem{Ka}  O. Kallenberg, {\it Probabilistic symmetries and invariance principles},  Probability and its applications, Springer-Verlag, (2005).

\bibitem{KaLi} L. Kauffman and S. Lins, {\it Temperley-Lieb recoupling theory and invariants of $3$-manifolds}, Ann. Math. Studies $134$, Princeton University Press, 1994. 

\bibitem{Ma12} C. Male, {\it The norm of polynomials in large random and deterministic matrices}, Probab. Theory Related Fields 154 (2012), 477--532.

\bibitem{MaNo13} S. Matsumoto and J. Novak, { \it Jucys-Murphy elements and unitary matrix integrals}, Int. Math. Res. Not. 2 (2013), 362--397.

\bibitem{NiSp}  A. Nica and R. Speicher, {\it Lectures on the combinatorics of free probability},  London Math. Soc.
Lect. Note Ser. 335, Cambridge University Press, Cambridge 2006.

\bibitem{Pi12} G. Pisier, {\it Remarks on a recent result by Paul Skoufranis}. Preprint (2012), arXiv:1203.3186.  

\bibitem{Sk} P. Skoufranis, {\it On a notion of exactness for reduced free products of C$^\ast$-algebras}, J. Reine Angew. Math. (2013), to appear.

\bibitem{Ti}  T. Timmerman,  {\it An invitation to quantum groups and duality}, EMS Textbooks in Mathematics, Zurich 2008.

\bibitem{VaVe}  S. Vaes and R. Vergnioux, {\it The boundary of universal discrete quantum groups, exactness, and factoriality}, Duke Math. J. 140 (2007), 35--84.

\bibitem{Ve} R. Vergnioux, {\it The property of rapid decay for discrete quantum groups},  J. Operator Theory 57 (2007), 303--324.

\bibitem{Wa} S. Wang, {\it Free products of compact quantum groups},  Comm. Math. Phys. 167 (1995), 671--692.

\bibitem{Wa98} S. Wang, {\it Quantum symmetry groups of finite spaces}, Comm. Math. Phys. 195 (1998), 195--211.

\bibitem{Wo}  S. Woronowicz, {\it Compact matrix pseudogroups},  Comm. Math. Phys. 111 (1987), 613--665.

\bibitem{Wo2} S. Woronowicz,  {\it Compact quantum groups},  Sym\'etries quantiques (Les Houches, 1995), North-
Holland, Amsterdam (1998),  845--884.

\end{thebibliography}
\end{document}